\documentclass[12pt, DIV=15]{scrartcl}
\usepackage[utf8]{inputenc}
\usepackage[english]{babel}
\usepackage{amsmath,amssymb,amsfonts,amsthm}
\usepackage{graphicx}
\usepackage{hyperref}
 \usepackage[draft]{fixme}
\newcounter{thm}
\newtheorem{theorem}[thm]{Theorem}
\newtheorem*{theorem*}{Theorem}
\newtheorem{lemma}[thm]{Lemma}
\newtheorem{proposition}[thm]{Proposition}
\newtheorem*{conjecture*}{Conjecture}

\newtheorem{remark}[thm]{Remark}
\theoremstyle{definition}
\newtheorem{definition}[thm]{Definition}

\newcommand{\eps}{\varepsilon}

\newcommand{\bbR}{\mathbb R}
\newcommand{\bbQ}{\mathbb Q}

\newcommand{\bbZ}{\mathbb Z}

\newcommand{\rot}{\mathrm{rot}\,}

\renewcommand{\mod}{\operatorname{mod}}

\title{Anti-classification for flows on two-tori}
\author{Nataliya Goncharuk}
\begin{document}
\maketitle
\begin{abstract}
We prove that the classification of real-analytic vector fields on the two-torus up to orbital topological equivalence does not admit a complete numerical invariant that is a Borel function. Moreover, smooth vector fields that are difficult to classify appear in generic smooth 7-parameter families.  In dimension 2, this improves the recent result of Gorodetski and Foreman \cite{ForGor} for non-classifiability of smooth diffeomorphisms up to continuous conjugacy. 
\end{abstract}

 \section{Introduction}
 
 Classification results  constitute one of the central parts of the modern theory of dynamical systems. For example, due to Denjoy theorem, rotation number is a complete invariant that classifies $C^2$-smooth circle diffeomorphisms $f\colon \bbR/\bbZ\to \bbR/\bbZ$ without periodic orbits up to continuous conjugacy. Separatrix skeletons, graphs, or schemes are used to classify planar vector fields up to orbital topological equivalence. Ornstein's theorem \cite{Orn}  states that entropy is a complete invariant that classifies, up to measure-preserving transformation, Bernoulli shifts on closed subsets of the space of bi-infinite sequences. 
 
On the other hand, Yoccoz's example shows that rotation number cannot be used to classify circle diffeomorphisms up to \emph{smooth} conjugacy in the case when the rotation number is Liouville.  Ph.~Kunde showed \cite{Kunde} that smooth conjugacy on the space of circle diffeomorphisms admits no complete numerical invariant that is a Borel function. This is an \emph{anti-classification} result that captures the complicated nature of the equivalence relation.

 Strong anti-classification results were obtained in ergodic theory. Consider the space  $X$ of $C^\infty$-smooth diffeomorphisms of a torus. Let the equivalence relation be a measure-preserving conjugacy. In \cite{ForWeiss}, M.~Foreman and B.~Weiss proved that this equivalence relation is not \emph{Borel}: the set $\{(S,T)\in X\times X \mid S\sim T \}$ is not Borel with respect to the $C^\infty$-topology in $X\times X$. Earlier in \cite{FGW}, M.~Foreman, D.~Rudolph, and B.~Weiss proved that measure-isomorphism for measure-preserving ergodic maps on the interval is not a Borel equivalence relation. In \cite{GeKu}, M.~Gerber and Ph.~Kunde proved that Kakutani equivalence relation for ergodic measure-preserving transformations is also not Borel. 

In Sec. \ref{sec-prelim}, we will introduce Borel reducibility, the partial order on equivalence relations that produces the hierarchy of equivalence relations (see also \cite{For18}). For many natural equivalence relations, their place in this hierarchy is not known. An important breakthrough was a paper by M.~Sabok \cite{Sab} who showed that  the isomorphism of separable $C^*$ algebras is the maximal equivalence relation among all orbit equivalence relations. J.~Zielinski \cite{Ziel} showed that the homeomorphism of compact metric spaces is also maximal among all orbit equivalence relations. It is an open question whether measure-isomorphism for measure-preserving ergodic maps has the same property.  
 
  
One of the natural equivalence relations in dynamical systems theory is continuous conjugacy.  In the space of diffeomorphisms, A.~Gorodetski and M.~Foreman \cite{ForGor} showed that this equivalence relation for smooth diffeomorphisms of $\bbR^2$ has no complete Borel numerical invariants. Moreover, for diffeomorphisms on $\bbR^5$, this equivalence relation is not Borel\footnote{ In \cite{ForGor}, authors announced stronger results, but they were not published as of 05/2025.}. However, proofs involve classification of diffeomorphisms that are highly degenerate. Related results were obtained in the space of continuous interval maps and circle maps, see \cite{BV} and references therein.

Planar vector fields can be classified up to orbital topological equivalence using a combinatorial invariant (in the form of separatrix skeletons, schemes, or Leontovich-Mayer-Fedorov graphs). Classification of vector fields on the torus is more complicated: since circle maps can appear as Poincare maps, classification invariant should incorporate both the information about the behavior of separatrices and the rotation number of the Poincare map. 
We will see that this is sufficient to obtain non-classifiability results similar to \cite[Theorem 2]{ForGor}.  

The proofs are not directly related to, but largely inspired by, new examples in the modern bifurcation theory for planar vector fields that arise from sparkling separatrix connections, see \cite{IKS}. 

Let $\mathcal V^2(T^2)$ be the space of $C^2$-smooth vector fields on the two-torus $T^2=\bbR^2/\bbZ^2$. Let $\mathcal V^\omega(T^2)$ be the space of real-analytic vector fields on the two-torus.
\begin{definition}
 Two vector fields $v,w\in \mathcal V^2(T^2)$ are orbitally topologically equivalent, $v \sim w$, if there exists a homeomorphism $H\colon T^2\to T^2$ that is homotopic to identity, such that $H$ takes orbits of $v$ to orbits of $w$, preserving time orientation.
\end{definition}
The main results of the paper are the following.
\begin{theorem}
\label{th-main}
Orbital topological equivalence in $\mathcal V^2(T^2) $ has no complete Borel numerical invariant:  
there is no Borel function 
$g \colon \mathcal V^2(T^2) \to  Y$ with $Y$ a Polish space such that for all $v, w \in X$, we have 
$v \sim w$ if and only if $g(v) = g(w)$.
\end{theorem}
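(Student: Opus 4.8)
\emph{Sketch of the approach.}

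The natural route is a \emph{Borel reduction} argument, in the spirit of \cite[Theorem 2]{ForGor}. By the Harrington--Kechris--Louveau form of the Glimm--Effros dichotomy (see Section~\ref{sec-prelim}), the equivalence relation $E_0$ of eventual agreement on $2^{\bbN}$ --- equivalently, the orbit equivalence relation of the $2$-adic odometer, or tail equivalence on $2^{\bbN}$, these being Borel bireducible --- is \emph{not} Borel reducible to the equality relation of any Polish space. A complete Borel numerical invariant $g$ for $\sim$ would yield, via $x\mapsto g(\Phi(x))$, precisely such a reduction for $E_0$; so Theorem~\ref{th-main} follows once we produce a Borel map $\Phi\colon 2^{\bbN}\to\mathcal V^2(T^2)$ with
\[
 x\,E_0\,y \quad\Longleftrightarrow\quad \Phi(x)\sim\Phi(y).
\]
I would in fact arrange $\Phi$ to be \emph{continuous} for the product topology on $2^{\bbN}$, so Borel measurability is automatic, and --- with extra care --- to take values among real-analytic fields, which also yields the analytic statement of the abstract. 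Since the theorem only asks for the absence of a complete Borel invariant (unlike the $5$-dimensional diffeomorphism case), reducing $E_0$ is exactly the right target; there is no need to aim for ``not Borel''.

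The vector fields $\Phi(x)$ would be modeled on \emph{generalized Cherry flows} and on the ``sparkling separatrix connection'' phenomenon of \cite{IKS}. Fix once and for all a closed invariant set $\Lambda\subset T^2$ (a hyperbolic limit cycle, or a Cherry-type quasiminimal set with a Diophantine rotation number $\rho$) together with a sequence of pairwise disjoint ``slots'' $U_1,U_2,\dots$ that accumulate on $\Lambda$ and are ordered by an intrinsic filtration of a neighbourhood of $\Lambda$ (for instance, nested annuli between consecutive limit cycles). In slot $U_n$ the bit $x_n\in\{0,1\}$ selects one of two local flow patterns $G^0_n,G^1_n$, differing by which of finitely many separatrix connections are formed; crucially, the separatrices issuing from $U_n$ are routed out of every shrinking neighbourhood of $\Lambda$, winding through a fixed reference region before closing up, so that the topological effect of a single bit is \emph{not} confined near $U_n$. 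Since flipping $x_n$ changes $\Phi(x)$ only on a set shrinking to $\Lambda$ as $n\to\infty$, and such configurations can be realized by summable (real-analytic) perturbations, the map $\Phi$ is well defined and continuous.

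For ``$\Rightarrow$'' one proceeds concretely: if $x$ and $y$ agree off a finite set, repair the finitely many discrepant slots one at a time, each by an orbital equivalence supported near that slot together with its route to the reference region and isotopic to the identity, then compose. For ``$\Leftarrow$'' one extracts a topological invariant localized near $\Lambda$: the combinatorial type --- cyclic order and pairing pattern of the separatrices, together with the induced lamination --- of the configuration $G^{x_1}_1G^{x_2}_2\cdots$ seen in a neighbourhood of $\Lambda$. An orbital topological equivalence $H$ homotopic to the identity fixes $\Lambda$, is continuous at $\Lambda$, and respects the intrinsic filtration ordering the slots; the point to prove is that $H$ therefore matches the two configurations up to a finite initial segment, forcing $x\,E_0\,y$.

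The heart of the matter --- and the step I expect to resist --- is calibrating the pair $G^0_n,G^1_n$: it must be \emph{flexible enough} that a finite set of bit-flips is absorbed by a homeomorphism homotopic to the identity, yet \emph{rigid enough} that an infinite, non-eventually-constant change is not. In two dimensions individual separatrix connections are topologically rigid, so the flexibility has to be manufactured by a global but isotopically trivial rerouting (this is where the ``sparkling'' winding is used), while the rigidity must be squeezed out of the accumulation geometry at $\Lambda$ as a genuine orbital-equivalence invariant that is insensitive to finite prefix changes but not to the tail. Getting both at once, proving the finite-matching claim above, and --- for the real-analytic version --- replacing the construction by one using only finitely many singular points (an analytic field on $T^2$ has finitely many), are where essentially all the work lies.
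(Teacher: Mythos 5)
Your high-level strategy is correct in spirit --- reduce a non-smooth Borel equivalence relation to $\sim$ --- but the proposal has a genuine gap at its core, which you flag yourself, and the paper takes a concretely different route that avoids it. You target $E_0$ (eventual agreement on $2^{\bbN}$) via an infinite sequence of slots $U_n$ accumulating on an invariant set $\Lambda$, each slot coding one bit through a choice of separatrix pattern. The decisive step --- proving that an orbital topological equivalence forces the two slot-configurations to agree up to a finite prefix, hence $x\,E_0\,y$ --- is exactly what you leave unresolved. This is not a detail: it carries the entire content of the theorem, and in the two-dimensional flow setting there is no off-the-shelf rigidity/flexibility calibration of the type used in the Foreman--Weiss or Foreman--Gorodetski constructions. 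You also correctly observe that the infinitely-many-slots picture is incompatible with real analyticity (a nonzero $C^\omega$ field on $T^2$ has finitely many singular points); that is not a refinement problem but a fatal obstruction to carrying your scheme over to Theorem~\ref{th-an}.

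The paper's construction is finite and sidesteps all of this. It reduces $E_\phi$ (the orbit relation of an irrational rotation restricted to an interval), Borel bireducible with $E_0$, but without any $E_0$-style bit-coding. The field $v_{\rho,\phi}$ has exactly four hyperbolic saddles; its first-return map on a meridian $M_0$ is the rigid rotation $x\mapsto x+\phi$ wherever defined; and two distinguished stable separatrices $\gamma_1,\gamma_2$ meet $M_0$ precisely along the two rotation orbits $\{-n\phi\}$ and $\{\rho-n\phi\}$. An orbital equivalence homotopic to the identity, straightened along flow lines via Lemma~\ref{lem-homeo}, induces an orientation-preserving circle homeomorphism carrying each of these dense orbits into the corresponding one; a homeomorphism mapping a dense orbit of $x\mapsto x+\phi$ into itself must be a rotation by an integer multiple of $\phi$, and comparing both orbits fixes $\rho_1-\rho_2$ up to integer multiples of $\phi$ modulo $1$ (Lemma~\ref{lem-eqvf}). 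The converse is an explicit cut-and-paste along finitely many flow boxes (Lemma~\ref{lem-eqphi}). In short, the rigidity you hoped to extract from accumulation geometry at $\Lambda$ is replaced by the arithmetic rigidity of circle homeomorphisms acting on dense rotation orbits; nothing accumulates, so the construction transfers painlessly to an explicit real-analytic Hamiltonian family for Theorem~\ref{th-an}.
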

\begin{theorem}
\label{th-an}
Orbital topological equivalence in $\mathcal V^{\omega}(T^2) $ has no complete Borel numerical invariant:  
there is no Borel function 
$g \colon \mathcal V^{\omega}(T^2) \to  Y$ with $Y$ a Polish space such that for all $v, w \in X$, we have 
$v \sim w$ if and only if $g(v) = g(w)$.
\end{theorem}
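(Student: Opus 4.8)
The plan is to reduce a known non-smoothable equivalence relation — namely, continuous conjugacy of circle diffeomorphisms with Liouville rotation number, which Kunde \cite{Kunde} showed admits no complete Borel numerical invariant — to orbital topological equivalence of real-analytic vector fields on $T^2$, via a Borel map $f\mapsto v_f$. The natural construction is the suspension flow: given a circle diffeomorphism $f$, build a vector field on $T^2$ whose Poincaré return map on some transversal circle is $f$ (or conjugate to it). Concretely, I would realize $v_f$ as a small real-analytic perturbation of the constant vector field $\partial_x + \alpha\,\partial_y$ on $T^2 = \bbR^2/\bbZ^2$, chosen so that the return map to $\{x=0\}$ equals a prescribed analytic circle diffeomorphism. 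To stay inside $\mathcal V^\omega(T^2)$ and keep the map Borel, one must show that the passage from $f$ to the perturbation terms (e.g.\ via a generating Hamiltonian-type function or an explicit formula for the time-$1$ map of a nonautonomous analytic ODE) is analytic-in, Borel-out, which is where care is needed but no deep obstacle lies — this is a routine but somewhat delicate construction.

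The key reduction step is then the equivalence: $f \sim g$ (continuous conjugacy of circle maps) if and only if $v_f \sim v_g$ (orbital topological equivalence on $T^2$). The forward direction is easy: a conjugating homeomorphism $h$ of the transversal circle, together with flowing along the suspension, produces a fiber-preserving homeomorphism of $T^2$ homotopic to identity carrying orbits to orbits with correct time orientation. The converse is the crux. Given an orbital topological equivalence $H$ of $v_f$ with $v_g$, I need to extract a conjugacy between the return maps. Here I would use the structure of the flows: every orbit is non-closed and winds around $T^2$ (the rotation number $\alpha$ is irrational and the perturbation is small, so there are no singular points and the orbit structure is that of a suspension), so $H$ maps the (unique up to isotopy) "irrational foliation" type of $v_f$ to that of $v_g$. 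The transversal circle $\{x=0\}$ need not be preserved, but one can replace $H$ by its composition with the flow to push $H(\{x=0\})$ back to a genuine transversal, and since $H$ is homotopic to identity it must send a positively-oriented transversal to a positively-oriented transversal. This yields a homeomorphism of transversals conjugating the respective first-return maps; because first-return maps on different transversals of the same flow are topologically conjugate (flow-conjugate), we get $f\sim g$.

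The main obstacle I anticipate is ensuring that the target of the reduction genuinely lives in $\mathcal V^\omega(T^2)$ while the source covers enough circle diffeomorphisms to invoke Kunde's theorem, and that the "no complete Borel invariant" property transfers along the reduction. For the transfer: if $g\colon \mathcal V^\omega(T^2)\to Y$ were a complete Borel numerical invariant for $\sim$, then $f\mapsto g(v_f)$ would be a complete Borel numerical invariant for continuous conjugacy on the relevant space of analytic circle diffeomorphisms, contradicting \cite{Kunde} (after checking that Kunde's non-classifiability already holds — or can be arranged to hold — within the analytic category, e.g.\ restricting to analytic diffeomorphisms of a fixed Liouville rotation number, which is the setting of Yoccoz-type examples). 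Theorem \ref{th-main} then follows a fortiori since $\mathcal V^\omega(T^2)\subset \mathcal V^2(T^2)$ and the restriction of a Borel invariant is Borel; alternatively one repeats the argument in the smooth category, where the suspension construction is even easier and one may additionally use the genericity/7-parameter-family statement advertised in the abstract. The delicate points to nail down are: (i) the measurability (Borelness) of $f\mapsto v_f$ with respect to the $C^\omega$ and $C^2$ topologies; (ii) that distinct conjugacy classes of $f$ produce distinct orbital-equivalence classes of $v_f$ — i.e.\ the injectivity-on-classes needed for a valid reduction, which is exactly the converse direction above; and (iii) handling the homotopy-to-identity constraint on $H$, which is essential to rule out orientation reversal of transversals and to keep the correspondence between rotation numbers rigid.
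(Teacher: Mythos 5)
There is a fundamental gap in your proposed reduction, and the paper itself anticipates it. The suspension construction does produce, for the backward direction, a homeomorphism between transversals conjugating the return maps --- but this yields only a \emph{topological} conjugacy of $f$ and $g$, not a smooth one. For a $C^2$ circle diffeomorphism with irrational rotation number and no periodic orbit, Denjoy's theorem says it is topologically conjugate to the corresponding rigid rotation; hence topological conjugacy of such circle diffeomorphisms is completely classified by the rotation number, which is a Borel complete invariant. So the equivalence $f \sim_{\text{top.\ conj.}} g \iff v_f \sim v_g$ that the suspension yields is a reduction of a \emph{smooth} equivalence relation into orbital topological equivalence, which proves nothing about non-smoothness. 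Kunde's anti-classification theorem concerns \emph{smooth} conjugacy, a strictly finer relation, and smooth conjugacy does not transfer along the suspension: two diffeomorphisms (e.g.\ a Liouville rotation and a Yoccoz example on it) that are topologically conjugate but not smoothly conjugate have suspensions that \emph{are} orbitally topologically equivalent. The paper's Remark after Theorem~\ref{th-an} flags exactly this point: ``result of \cite{Kunde} does not imply Theorem~\ref{th-main}, since we consider a different equivalence relation.''

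The paper's actual argument is therefore of a different nature. Rather than relying on an already-known anti-classification theorem for circle maps, it builds vector fields that carry, beyond the rotation number of the Poincar\'e map, an additional invariant coming from the \emph{relative position of separatrices} of saddle points. The Poincar\'e map of $v_{\rho,\phi}$ is forced to be a rigid rotation by $\phi$ wherever defined, killing the Denjoy smoothing; the information that distinguishes $v_{\rho_1,\phi}$ from $v_{\rho_2,\phi}$ is the $\phi$-orbit of the parameter $\rho$ recording where the separatrix $\gamma_2$ meets the meridian $M_0$ relative to $\gamma_1$. Lemmas~\ref{lem-eqvf} and~\ref{lem-eqphi} show this data reproduces exactly the equivalence relation $E_\phi$, which is non-smooth by an elementary ergodicity argument (Proposition~\ref{prop-Ephi}). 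For the analytic case (Theorem~\ref{th-an}), the paper exhibits an explicit Hamiltonian family~\eqref{eq-H} and verifies numerically that it realizes the same separatrix configuration, then invokes the same lemmas. Your proposal has no analogue of the separatrix invariant and so cannot escape the Denjoy collapse; you would need a structurally new ingredient to make the suspension route work, and in dimension two in the analytic category it is unclear that one exists.
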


 By a classical Kuratowski's theorem, all uncountable Polish spaces are Borel isomorphic. In particular, any Polish space $Y$ is Borel isomorphic to $\bbR$, thus we refer to these statements as the absence of \emph{numerical} invariants for orbital topological equivalence. Results also imply that there are no complete functional invariants in any Polish functional space.   
\begin{remark}
While circle diffeomorphisms appear as first-return maps for vector fields on the torus, result of \cite{Kunde} does not imply Theorem \ref{th-main}, since we consider a different equivalence relation. 
\end{remark}
\begin{remark}
While time-1 flows of vector fields $v\in \mathcal V(T^2)$ are diffeomorphisms of $T^2$, continuous conjugacy for resulting diffeomorphisms does not coincide as equivalence relation to orbital topological equivalence of corresponding vector fields. So Theorem \ref{th-main} does not imply  \cite[Theorem 2]{ForGor}. However, equivalence of vector fields is considered to be much simpler than equivalence of planar diffeomorphisms (e.g. Newhouse phenomenon does not happen for flows of vector fields). So in a sense, our result is stronger than  \cite[Theorem 2]{ForGor}. Also, methods of \cite{ForGor} do not allow analytic diffeomorphisms, in contrast with Theorem \ref{th-an}.  It is worth mentioning that the main result of \cite{ForGor} (Theorem 1) is a much stronger anti-classification result for diffeomorphisms of $\bbR^5$. 
\end{remark}

 Below we will formulate and prove a stronger version of Theorem \ref{th-main}: vector fields that are difficult to classify appear in a generic 7-parameter family, see Theorem \ref{th-gen}.
 The following questions are open.

\emph{ Can we find a two-dimensional manifold $M$ such that the orbital topological equivalence of $C^\omega$ vector fields on $M$ is not Borel?}
  
\emph{ Can we find a generic finite-parameter family of vector fields $v_\rho$, $\rho\in \bbR^k$ on a two-dimensional manifold such that the orbital topological equivalence is not Borel: the graph $\{(\rho_1, \rho_2)\in \bbR^{2k} \mid v_{\rho_1}\sim v_{\rho_2} \}$ of the orbital topological equivalence relation on this family is not a Borel set?  }
 
 We refer the reader to \cite{open} for the list of open questions in descriptive set theory related to dynamical systems.
 
 \section{Preliminaries: Borel reduction}
 \label{sec-prelim}
 We refer the reader to \cite{For18} for an expository introduction to Borel reduction and the hierarchy of equivalence relations with respect to the Borel reduction.

Recall that a topological space is called \emph{Polish} if it is separable and completely metrizable (i.e. admits a complete metric that is compatible with the topology). The $\sigma$-algebra of \emph{Borel sets} of $X$
 is the smallest $\sigma$-algebra containing all open sets. A map $f\colon X\to Y$
 between two topological spaces is called \emph{Borel}  if  for any open set $A$, $f^{-1}(A)$ is a Borel set. 

Consider an  equivalence relation on the set $X$. We write $x\sim_E y$ if $x,y$ are equivalent with respect to $E$. 
\begin{definition}
An equivalence relation $E$ on $X$ is \emph{smooth} if 
 there exists a Polish
space $Y$ and a Borel function $f \colon X \to Y$ such that $x \sim_E y$ holds if and only if $f(x) = f(y)$ for all $x, y\in X$.
\end{definition}
Theorem \ref{th-main} means that orbital topological equivalence of vector fields is non-smooth. 

We will use the following non-smooth equivalence relation. 
\begin{definition}
Let $E_\alpha$ be the equivalence relation on the circle $\bbR/\bbZ$ given by $x\sim_{E_{\alpha}} y$ if $x=y +n\alpha \mod 1$. 
\end{definition}
The following proposition is the particular case of the general result (see \cite[Theorem 1.1]{HKL}): an equivalence relation that admits a non-atomic ergodic measure is not smooth. This statement is a part of  the Glimm-Effros dichotomy (first discovered in \cite{G}, \cite{E} for group actions). For completeness, we will give an elementary proof of the proposition below.  
\begin{proposition}
\label{prop-Ephi}
For any $\alpha\notin \bbQ$, for any nonempty open interval $I\subset \bbR/\bbZ$,  $E_\alpha$ is a non-smooth equivalence relation both on  $I$ and on $I\setminus \{n\phi\}_{n\in \bbZ}$.
\end{proposition}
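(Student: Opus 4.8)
The plan is to deduce non-smoothness from ergodicity of the irrational rotation, combined with a saturation trick that transfers the usual measure dichotomy from the whole circle to the subinterval. Write $J$ for either $I$ or $I\setminus\{n\phi\}_{n\in\bbZ}$; in both cases $J$ is a Borel subset of $\bbR/\bbZ$ of positive finite Lebesgue measure $m(J)$, and the deletion of a countable set will turn out to be irrelevant. Suppose, for contradiction, that there is a Polish space $Y$ and a Borel map $f\colon J\to Y$ with $x\sim_{E_\alpha}y\iff f(x)=f(y)$ for all $x,y\in J$. Fix a countable basis $\{U_k\}_{k\in\bbN}$ of $Y$. Each preimage $A_k:=f^{-1}(U_k)\subseteq J$ is Borel and \emph{$E_\alpha$-saturated inside $J$}: if $x\in A_k$, $y\in J$, and $y\sim_{E_\alpha}x$, then $f(y)=f(x)\in U_k$, so $y\in A_k$.

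The key step is the claim that every Borel $E_\alpha$-saturated $A\subseteq J$ has Lebesgue measure $0$ or $m(J)$. Set $\widetilde A:=\bigcup_{n\in\bbZ}(A+n\alpha)\subseteq\bbR/\bbZ$; this is a genuinely $E_\alpha$-invariant Borel set, and saturation inside $J$ gives back $\widetilde A\cap J=A$: if $x\in\widetilde A\cap J$ then $x=a+n\alpha$ with $a\in A\subseteq J$, hence $a\sim_{E_\alpha}x$ and so $x\in A$. Since $\alpha$ is irrational, rotation by $\alpha$ is ergodic for Lebesgue measure — for instance, an invariant indicator $\mathbb 1_{\widetilde A}\in L^2(\bbR/\bbZ)$ has all nonzero-frequency Fourier coefficients killed because $e^{2\pi i n\alpha}\neq 1$ for $n\neq 0$, so $\widetilde A$ has measure $0$ or $1$. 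Applying the same to the saturated set $J\setminus A$, whose saturation $\widetilde{J\setminus A}$ satisfies $\widetilde{J\setminus A}\cap J=J\setminus A$, we see that $\widetilde A$ and $\widetilde{J\setminus A}$ cannot both have measure $1$: their intersection would have measure $1$ yet would meet $J$ (a set of positive measure) in $A\cap(J\setminus A)=\emptyset$. Hence one of $\widetilde A,\widetilde{J\setminus A}$ is null, so one of $A,J\setminus A$ is null, proving the claim.

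Finally I would extract the contradiction. Let $N=\{k:m(A_k)=0\}$; then $\bigcup_{k\in N}A_k$ is null, so $J':=J\setminus\bigcup_{k\in N}A_k$ has full measure in $J$, hence is uncountable. Every $E_\alpha$-class is countable, so $J'$ contains two points $x_1,x_2$ in distinct classes, whence $f(x_1)\neq f(x_2)$; using that $Y$ is Hausdorff and $\{U_k\}$ is a basis, pick disjoint basic open sets $U_j\ni f(x_1)$ and $U_{j'}\ni f(x_2)$. Since $x_1\in A_j\cap J'$ and $x_2\in A_{j'}\cap J'$, neither $j$ nor $j'$ lies in $N$, so by the claim $m(A_j)=m(A_{j'})=m(J)>0$; but $A_j\cap A_{j'}=f^{-1}(U_j\cap U_{j'})=\emptyset$, contradicting that two subsets of $J$ each of finite measure $m(J)$ cannot be disjoint. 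I expect the only point requiring care to be the interaction of the \emph{restricted} relation $E_\alpha|_J$ with Lebesgue measure: because $J$ is not rotation-invariant one cannot invoke ergodicity directly, and the whole argument hinges on passing to the invariant saturation $\widetilde A$ and checking the identity $\widetilde A\cap J=A$ — once that is in hand, removing a countable set from $I$ changes nothing, since it alters no measures.
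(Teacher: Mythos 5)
Your proposal is correct and rests on the same central mechanism as the paper: combine ergodicity of the irrational rotation with a saturation observation to get the dichotomy ``every $E_\alpha$-saturated Borel subset of $J$ has measure $0$ or $m(J)$.'' The paper states this dichotomy more tersely (``every set $A_y$ is an intersection of an $R_\alpha$-invariant measurable set with $I$''); your unpacking via $\widetilde A := \bigcup_n (A + n\alpha)$ and the identity $\widetilde A \cap J = A$ is exactly what is being used there and is worth spelling out. Where you diverge is in how you extract the contradiction from the dichotomy. The paper first reduces to $Y = \bbR$ (via Borel isomorphism of Polish spaces with $\bbR$) and then runs a supremum/level-set argument on the sets $A_y = f^{-1}((-\infty,y))$ to produce a single fiber $f^{-1}(x)$ of measure $m(I) > 0$, which contradicts $f^{-1}(x)$ being (a piece of) one countable orbit. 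You instead work directly with a general Polish $Y$: take a countable basis $\{U_k\}$, discard the null preimages, and use that $J'$ is uncountable while orbits are countable to find two inequivalent points in $J'$, whose images separate into disjoint basic opens $U_j, U_{j'}$ with both $A_j, A_{j'}$ of full measure but disjoint --- a contradiction. Your route avoids invoking the Borel isomorphism theorem and does not exploit the linear order of $\bbR$, so it is slightly more self-contained; the paper's supremum trick is a bit shorter once one has already granted $Y=\bbR$. Both are valid, and the remark at the end of your writeup --- that the whole thing hinges on the saturation identity $\widetilde A \cap J = A$, and that deleting a countable set from $I$ changes no measures --- pinpoints precisely the feature that makes the restricted relation tractable.
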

\begin{proof}
Let $R_\alpha(x)=x+\alpha$ be a rotation on a circle $\bbR/\bbZ$. 
If there is a Borel numerical invariant $f\colon I\to \bbR$  for $E_\alpha$, then the sets $A_y =f^{-1}((-\infty,y))\subset I$ must be Borel, and thus Lebesgue measurable. Since every set $A_y$ is an intersection of $R_\alpha$-invariant measurable set with $I$, and $R_\alpha$ is ergodic, the measure of each set $A_y$ is equal to $0$ or to $\mu(I)$.  Let $x$ be a supremum of the set $\{y\in \bbR, \mu (A_y)=\mu(I)\}$.  Since $\mu(\cap A_n)=\mu(\varnothing)=0$ and $\mu(\bigcup A_n)=\mu(I)$, $x$ is a finite real number. Then $\mu (f^{-1}((-\infty, x+1/n))=\mu(I)$ for any $n$ and thus $\mu (f^{-1}((-\infty, x])=\mu(I)$. On the other hand,  $\mu (f^{-1}((-\infty, x-1/n))=0$ for any $n$ and thus $\mu (f^{-1}((-\infty, x))=0$. We conclude that $f^{-1}(x) $ has measure $\mu(I)$. This is impossible since $f^{-1}(x)$ is the intersection of a single orbit of an irrational rotation with $I$ and has measure 0. The proof for $I\setminus \{n\phi\}_{n\in \bbZ}$ is analogous. 
\end{proof}
Our main tool is a Borel reduction for equivalence relations. 

\begin{definition}
Let $E\subset X\times X$ and $F\subset Y\times Y$ be equivalence relations on Polish spaces $X$ and $Y$, respectively. A Borel function $f \colon  X \to Y$ is called a \emph{Borel reduction} of $E$ to $F$ if for all $x_1$, $x_2 \in X$, we have that  $x_1 \sim_E x_2$ if and only if $f(x_1)\sim_{F} f(x_2)$.

We say that $E$ is Borel reducible to $F$, and write $E \le _B F$.
\end{definition}
Informally, if $E$ is Borel reducible to $F$, then $F$ is ``not less complicated'' than $E$. In particular, an equivalence relation $E$ is smooth if and only if $E \le_B =_{\bbR}$, where $=_{\bbR}$ is the equality relation on $\bbR$. 

Borel reducibility is a partial order; hence  if $E_1\le_B E_2$ and $E_1$ is non-smooth, then $E_2$ is also non-smooth (otherwise $E_1 \le_B E_2 \le =_{\bbR}$). We conclude that Theorem \ref{th-main} is implied by the following. 
 \begin{theorem}
 \label{th-family}
 For each irrational number $\phi$, there exists a smooth family of vector fields $v_{\rho,\phi} \subset \mathcal V^2(T^2)$, $\rho\in \bbR/\bbZ$,  such that the equivalence relation $E_\phi$ on $\bbR\setminus \{n\phi\}_{n\in \bbZ}$ is Borel reducible to orbital topological equivalence: for $\rho_1, \rho_2\in \bbR\setminus \{n\phi\}_{n\in \bbZ}$, we have $v_{\rho_1, \phi}\sim v_{\rho_2, \phi}$ if and only if $\rho_1  = \rho_2 \mod n\phi$. 
 \end{theorem}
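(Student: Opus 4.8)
The plan is to prove Theorem \ref{th-family} by exhibiting the reduction explicitly: the family $v_{\rho,\phi}$ will consist of Cherry-type flows built from the linear flow of slope $\phi$ by two localized modifications, one ``anchored'' at the point $0$ and one ``mobile'' at the point $\rho$, designed so that the orbital-equivalence class of $v_{\rho,\phi}$ records exactly the $R_\phi$-orbit of $\rho$. Concretely, fix $\phi$ and let $\ell_\phi$ be the linear flow on $T^2=\bbR^2/\bbZ^2$ whose orbits are the lines of slope $\phi$; its Poincar\'e return map on the transverse circle $\Gamma=\{y=0\}$ is the rotation $R_\phi$, and its orbit (leaf) space is $\bbR/(\bbZ+\phi\bbZ)$, the leaf through the point at position $t\in\Gamma$ being $L_t$ (so $L_t=L_{t'}$ iff $t-t'\in\bbZ+\phi\bbZ$). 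I would define $v_{\rho,\phi}$ by modifying $\ell_\phi$ inside two small boxes, around the points of $\Gamma$ at positions $0$ and $\rho$: in each box one slows the flow down near the centre until a hyperbolic saddle, together with a balancing singular point (so that the index count on $T^2$ is preserved), is born, producing a flow with a quasiminimal set in the style of Cherry's example. The two cells should be made topologically distinguishable --- for instance by giving them different numbers of singular points, or by arranging the anchored saddle to lie in the quasiminimal set while the mobile cell carries an attracting basin. Since the mobile cell is a rigid translate of a fixed $C^\infty$ template, $\rho\mapsto v_{\rho,\phi}$ is continuous (hence Borel) into $\mathcal V^2(T^2)$. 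Each $v_{\rho,\phi}$ is semiconjugate to $\ell_\phi$ by a continuous degree-one map $h_\rho$ collapsing the complement of the quasiminimal set and the boundary identifications it induces; under $h_\rho$ the anchored cell collapses onto $L_0$ and the mobile cell onto $L_\rho$. One takes $\rho$ in the complement of $\{n\phi \bmod 1\}_{n\in\bbZ}$ so that $L_\rho\neq L_0$, i.e.\ so the mobile cell stays off the leaf carrying the anchored structure.

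\emph{The ``if'' direction.} If $\rho_2=\rho_1+n\phi \bmod 1$ then $L_{\rho_2}=L_{\rho_1}$ as a subset of $T^2$, and the two mobile cells sit over two points of this single $\ell_\phi$-orbit which differ by the flow. A homeomorphism supported in a thin flow-box around a finite orbit arc, pushing along the flow by a position-dependent amount, slides the mobile cell of $v_{\rho_1,\phi}$ onto that of $v_{\rho_2,\phi}$; composing it with a standard conjugacy matching the two (combinatorially identical) quasiminimal sets, and checking that the anchored cell is left untouched, produces an orbital topological equivalence homotopic to the identity.

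\emph{The ``only if'' direction.} Suppose $H$ is an orbital topological equivalence $v_{\rho_1,\phi}\sim v_{\rho_2,\phi}$, homotopic to the identity. Then $H$ sends singular points to singular points preserving their topological type, and since the two cells are distinguishable it sends anchored cell to anchored cell and mobile cell to mobile cell; it also sends the quasiminimal set of $v_{\rho_1,\phi}$ onto that of $v_{\rho_2,\phi}$. The complementary regions of the quasiminimal set and the boundary identifications they determine are intrinsic, so $H$ respects them and descends through $h_{\rho_1},h_{\rho_2}$ to a homeomorphism $\bar H$ of $T^2$ carrying $\ell_\phi$-orbits to $\ell_\phi$-orbits, orientation- and time-orientation-preserving and homotopic to the identity. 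Lifting to $\bbR^2$ and using that $\phi$ is irrational, one checks that every such $\bar H$ acts on the leaf space $\bbR/(\bbZ+\phi\bbZ)$ of $\ell_\phi$ as a single translation $t\mapsto t+c$. Since $\bar H$ fixes $L_0$ we get $c\in\bbZ+\phi\bbZ$, so $\bar H$ fixes every leaf; and since $\bar H$ sends $L_{\rho_1}$ to $L_{\rho_2}$ we conclude $L_{\rho_1}=L_{\rho_2}$, that is $\rho_1-\rho_2\in\bbZ+\phi\bbZ$, i.e.\ $\rho_1=\rho_2\bmod n\phi$.

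\emph{Main obstacle.} The work concentrates in the construction and in the descent step. First, one must genuinely build $C^2$ (indeed $C^\infty$) Cherry-type flows with the prescribed quasiminimal set, the semiconjugacy onto $\ell_\phi$, continuous dependence on $\rho$, and two provably non-homeomorphic cells --- the delicate point being that inserting a saddle cannot be done inside a plain flow-box and requires a careful ``tangency-then-perturbation'' construction along a transverse arc. Second, and most subtly, one must show that an arbitrary homeomorphism carrying orbits to orbits --- an a priori very weak hypothesis --- is nevertheless forced to respect the collapsed structure (the minimal part of the quasiminimal set, the ``mouths'' of the gaps, the basins) finely enough to descend to $\bar H$; once this is in place, classifying the orbit-preserving self-maps of $\ell_\phi$ and reading off the invariant $L_\rho\in\bbR/(\bbZ+\phi\bbZ)$ is routine.
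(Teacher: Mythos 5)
Your proposal reaches the conclusion by a route that is conceptually parallel but technically quite different from the paper's. You build Cherry-type flows whose first-return map on the transversal has flat intervals and is only \emph{semiconjugate} to $R_\phi$; you then collapse the gaps via $h_\rho$ to $\ell_\phi$ and push an arbitrary orbital equivalence $H$ down to a self-map $\bar H$ of $\ell_\phi$, classified by its action on the leaf space. The paper instead engineers $v_{\rho,\phi}$ so that the Poincar\'e map on $M_0$ \emph{is} the rotation $x\mapsto x+\phi$ wherever defined: the ``cells'' $U_1,U_2$ are bigons bounded by saddle-to-saddle connections, the corridor outside $U_1\cup U_2$ is a suspension of $R_\phi$, and the two cells are distinguished by their interior phase portraits (node vs.\ limit cycle). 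This lets the paper replace your collapse-and-descend step with a much shorter argument (Lemma~\ref{lem-homeo}): project $H(M_0)$ back to $M_0$ along trajectories to get a circle homeomorphism $\xi\circ H$ that already commutes with $R_\phi$, hence is a rotation, and read off $\rho$ directly from its effect on $M_0\cap\gamma_1$ and $M_0\cap\gamma_2$. Your descent step---showing that $H$ sends fibres of $h_{\rho_1}$ to fibres of $h_{\rho_2}$ so that $\bar H$ is well-defined and continuous---is exactly where your route requires genuine additional work that you correctly flag as the main obstacle; the paper's design removes that obstacle by construction. Both approaches ultimately hinge on the same elementary fact (a circle homeomorphism commuting with an irrational rotation is a rotation), and both get the ``if'' direction by sliding the mobile structure along the flow as in Lemma~\ref{lem-eqphi}, so your outline is sound; but the paper's choice to make the Poincar\'e map literally $R_\phi$ (no quasiminimal Cantor lamination, no semiconjugacy) is what turns the hard half into a two-paragraph argument.
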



%
%
 
 This theorem is proved in the next three sections.
 
\section{Construction of the family  $v_{\rho,\phi}$.}

Recall that we consider vector fields on the two-torus $T^2=(\bbR/\bbZ)^2$.
Let  $M_s= \bbR/\bbZ\times \{s\}$ be the meridians of the torus. 
 Fix $\phi\in [0,1]\setminus \bbQ$ and $\rho\in \bbR/\bbZ$.

\begin{figure}[h]
\begin{center}
\includegraphics[width=0.4\textwidth]{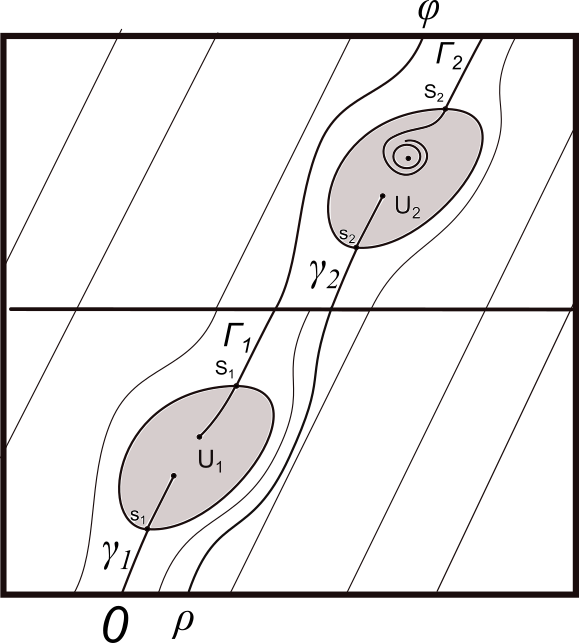}
\caption{Vector field $v_{\rho, \phi}$. }
\end{center}
\end{figure}
   
We define $v_{\rho, \phi}$ in the following way. 
  \begin{itemize}

\item On neighborhoods of $M_0$ and $M_{0.5}$, define $v_{\rho,\phi}=(\phi, 1)$.

\item Let the vector field $v_{\rho,\phi}$  in $\{0<y<0.5\}$ have two saddles $s_1, S_1$. 

One stable separatrix $\gamma_1$ of $s_1$ intersects $M_0$ at $(0,0)$, two unstable separatrices of $s_1$ form separatrix connections with two stable separatrices of $S_1$, an unstable separatrix $\Gamma_1$ of $S_1$ intersects $M_{0.5}$ at $(\phi/2, 1/2)$. Let $U_1$ be bounded by the separatrix connections; in $U_1$, the vector field $v_{r,\phi}$ has one stable and one unstable node. 

The correspondence map along $v_{r,\phi}$ between the sides of  $R_1$ is given by $(x,0)\to (x+\phi/2,1/2)$,  except it is undefined at $(0,0)$. 

\item For $\rho=0$, the vector field $v_{0,\phi}$ in the domain $\{0.5<y<1\}$ has a similar structure, with saddles $s_2, S_2$ and separatrices $\gamma_2,\Gamma_2$ that intersect $M_{0.5}$ and $M_1$ at $(\phi/2, 1/2)$ and $(\phi, 1)$ respectively.  The only difference is, that in the domain $U_2$ bounded by its separatrix connections, the vector field $v_{\rho,\phi}$ has one stable node and one unstable limit cycle with a stable node in it. 

\item For other values of $\rho$, in the domain $\{0.5<y<1\}$, we put $v_{\rho, \phi}(x,y) = v_{0,\phi} (x-\rho, y). $ 
 \end{itemize}

With this definition, $\rho\to v_{\rho,\phi}$ is a smooth family of vector fields for any fixed $\phi$.  
The Poincare map under $v_{\rho,\phi}$ from the meridian $M_0$ of the torus to itself coincides with $x\to x+\phi$ everywhere, except it is undefined at the intersections with $\gamma_1, \gamma_2$. 
If  $\rho\notin \{n\phi\}_{n\in \bbZ}$, then $\Gamma_2$ does not coincide with $\gamma_1$ and $\Gamma_1$ does not coincide with $\gamma_2$. 
In this case, we have  $M_0\cap \gamma_1 =\{-n\phi\}_{n=0}^{\infty} $ and $M_0\cap \gamma_2=\{\rho-n\phi\}_{n=0}^{\infty}$. 

Recall that if a point $a$ is non-singular for a vector field, $v(a)\neq 0$, then there exists a  neighborhood $U$ with a smooth chart $H\colon U\to \bbR^2$ such that $H_* v = (1,0)$. 
A continuous curve $\gamma$ is \emph{topologically transverse} to the vector field if it does not pass through singular points, and for any point $a\in \gamma$ there exists its neighborhood $U$ such that the image $H(\gamma\cap U)$ is a graph of a continuous function $x=x(y)$. The next lemma follows from elementary properties of correspondence maps. 
\begin{lemma}
\label{lem-homeo}
For any simple closed loop $\alpha\subset T^2$ homotopic to the meridian $M_0$ that does not intersect $\overline U_1\cup \overline U_2$ and is topologically transverse to $v_{\rho, \phi}$, there exists a homeomorphism  $\xi\colon \alpha\to M_0$ with the following property: if  $\xi(p_1)=p_2$, then either $p_1, p_2$ belong to the same trajectory of $v_{\rho, \phi}$, or $p_1\in \Gamma_{1}\cup \Gamma_2$ and $p_2\in \gamma_{1}\cup \gamma_2$. 
\end{lemma}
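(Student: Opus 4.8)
The plan is to construct $\xi$ by following trajectories from $\alpha$ to $M_0$, using the flow of $v_{\rho,\phi}$ together with the explicit structure of the correspondence maps through the two bands $\{0<y<0.5\}$ and $\{0.5<y<1\}$. Since $\alpha$ is homotopic to $M_0$, simple, closed, topologically transverse, and disjoint from $\overline U_1\cup\overline U_2$, every point $p_1\in\alpha$ lies on a well-defined forward or backward trajectory that eventually crosses $M_0$; the subtlety is that trajectories hitting $\gamma_1$ or $\gamma_2$ never reach $M_0$ in one direction, and trajectories emanating from $\Gamma_1$ or $\Gamma_2$ are the ones that ``replace'' them. First I would set up the local flow-box normalization $H\colon U\to\bbR^2$ with $H_*v=(1,0)$ near each non-singular point of $\alpha$; topological transversality guarantees that in each box $\alpha$ is a graph $x=x(y)$, so locally $\alpha$ is crossed transversally by the flow and the local ``flow projection'' onto a vertical segment is a homeomorphism.

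Next I would define the correspondence map globally. Outside $\overline U_1\cup\overline U_2$, the vector field is essentially the constant field $(\phi,1)$ (near the meridians) glued to the flow in the saddle-rectangles $R_1, R_2$ whose correspondence maps are the explicitly given translations $(x,0)\mapsto(x+\phi/2,1/2)$ on $M_0\to M_{0.5}$ (undefined at $(0,0)$) and the $\rho$-shifted analogue on $M_{0.5}\to M_1=M_0$. Composing, the Poincaré return map $M_0\to M_0$ is $x\mapsto x+\phi$, undefined exactly on $M_0\cap(\gamma_1\cup\gamma_2)=\{-n\phi\}_{n\ge0}\cup\{\rho-n\phi\}_{n\ge0}$. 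For a point $p_1\in\alpha$, flow forward (or backward, according to which side of $\alpha$ meets $M_0$ first along the flow — this is determined once and for all since $\alpha$ separates $M_0$ from itself in a collar): if the trajectory reaches $M_0$, set $\xi(p_1)$ to be that intersection point, which lies on the same trajectory. If instead the forward trajectory of $p_1$ runs into a saddle along $\gamma_1$ or $\gamma_2$ (equivalently, $p_1\in\Gamma_1\cup\Gamma_2$, since the separatrix connections in $U_1,U_2$ route the unstable separatrices $\Gamma_i$ precisely onto the trajectories that, backward from $M_{0.5}$ resp. $M_1$, came from $\gamma$'s), then assign $\xi(p_1)$ to be the corresponding missing point of $\gamma_1\cup\gamma_2$ on $M_0$; there are exactly two such points on $\alpha$ (one on $\Gamma_1$, one on $\Gamma_2$) and exactly the two ``missing'' points $(0,0)$ and $\rho$-image on $M_0$, matched by continuity.

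I would then verify $\xi$ is a homeomorphism: it is a bijection because the return map $x\mapsto x+\phi$ is a bijection of $M_0$, the band correspondence maps are homeomorphisms onto their (co-)domains off the finitely many separatrix points, and on the complement of the $\Gamma_i$-points $\xi$ is exactly the composition of these flow-correspondence homeomorphisms restricted to $\alpha$ (a graph over $M_0$ via transversality); at the two exceptional points, $\xi$ extends continuously because the flow box picture shows the correspondence map extends continuously across a separatrix of a hyperbolic saddle onto the separatrix image. Continuity of the inverse follows symmetrically, or from the fact that $\alpha$ and $M_0$ are compact Hausdorff. The second alternative in the conclusion ($p_1\in\Gamma_1\cup\Gamma_2$, $p_2\in\gamma_1\cup\gamma_2$) is then automatic from the construction.

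\textbf{Main obstacle.} The delicate point is the behavior at the separatrices: I must check that ``following the flow'' from $\alpha$ to $M_0$ genuinely defines a \emph{continuous} assignment across the two points where the trajectory limits onto a saddle, i.e. that the correspondence map through the saddle-rectangle $R_i$ — which is a homeomorphism on each side of the separatrix — has matching one-sided limits that land on the unstable separatrix's trace on $M_0$, so that the two ``missing'' points of the return map are filled in coherently. This is exactly where the placement of the separatrix connections bounding $U_1,U_2$ and the identification $M_0\cap\gamma_1=\{-n\phi\}$, $M_0\cap\gamma_2=\{\rho-n\phi\}$ is used; everything else reduces to the standard flow-box and hyperbolic-saddle correspondence-map lemmas. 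I expect the write-up to consist mostly of carefully chasing these limits in local charts.
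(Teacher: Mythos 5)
Your overall strategy — define $\xi$ by transporting points of $\alpha$ to $M_0$ along trajectories of $v_{\rho,\phi}$, and fill in the finitely many exceptional trajectories by continuity — is the same as the paper's, and you correctly identify the separatrix points as the technical crux. However, you work directly on the torus, whereas the paper first lifts $v_{\rho,\phi}$, $\alpha$, and $M_0$ to the cylinder $\bbR/\bbZ\times\bbR$ and then translates $\hat\alpha$ vertically so that it is disjoint from and strictly above $\hat M_0=\{y=0\}$. That one step does the real work, and your proposal has two concrete gaps because you skip it.

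First, your disambiguation of the flow direction — ``flow forward or backward, $\ldots$ determined once and for all since $\alpha$ separates $M_0$ from itself in a collar'' — silently assumes $\alpha\cap M_0=\varnothing$. This is not one of the hypotheses of the lemma, and it can fail in the application: in Lemma~\ref{lem-eqvf} the lemma is applied with $\alpha=H(M_0)$, and there is no reason for $H(M_0)$ to be disjoint from $M_0$. If $\alpha$ crosses $M_0$, the torus minus $\alpha$ is still an annulus, but $M_0$ is then cut into arcs lying on both sides of $\alpha$, so ``which side meets $M_0$ first'' is not globally defined. On the torus the trajectory through a non-separatrix point of $\alpha$ also hits $M_0$ in a dense set of points, so it is not a priori clear that ``first intersection'' depends continuously on the starting point; it is exactly the lift, with $\hat\alpha$ disjoint from and above $\hat M_0$, that turns this into the textbook correspondence map between two disjoint global cross-sections of $-\hat v$. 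Without it you have to argue separately that your patchwise-defined $\xi$ is well-defined, injective, and continuous, which you do not do.

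Second, and as a minor point, your assertion that there are ``exactly two such points on $\alpha$ (one on $\Gamma_1$, one on $\Gamma_2$)'' is not justified and is in general false: $\Gamma_1$ and $\Gamma_2$ each meet $\alpha$ in a dense set on the torus, and even after lifting, the number of intersections of $\hat\alpha$ with the lifts of $\Gamma_{1,2}$ lying between $\hat M_0$ and $\hat\alpha$ depends on how many fundamental domains separate the two lifted curves. The paper's statement and proof deliberately avoid counting these points; the conclusion only needs that each exceptional $p_1$ lies on $\Gamma_1\cup\Gamma_2$ and is sent into $\gamma_1\cup\gamma_2$, which holds with any finite number of exceptions. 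To repair your write-up, insert the cylinder lift at the start, replace the directional case analysis by the correspondence map of $-\hat v$ between $\hat\alpha$ and $\hat M_0$, and drop the ``exactly two'' claim; after that the continuous-extension and homeomorphism checks you sketch go through.
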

\begin{proof}
Lift $v_{\rho, \phi}$ to the vector field $\hat v$ on the cylinder $\bbR/\bbZ\times \bbR$.
Lift $\alpha$ and $M_0$ to the cylinder $\bbR/\bbZ\times \bbR$ so that the lifts $\hat \alpha$, $\hat M_0=\{y=0\}$ do not intersect and $\hat \alpha$ is above $\hat M_0$. Then the correspondence map $\hat \xi\colon \hat \alpha \to \hat M_0$ along trajectories of $-\hat v$ is well-defined. Indeed, since both curves are topologically transverse to $\hat v$, the only obstructions for extending the correspondence  map are intersections of $\hat M_0$ with stable separatrices  of $\hat v$ and intersections of $\hat \alpha$ with unstable separatrices of $\hat v$. Since $\hat M_0$ and $\hat \alpha$ do not intersect $\overline U_1, \overline U_2$, these are the intersections of the lifts of  $\gamma_{1,2}$ with $\hat M_0$, and of the lifts of $\Gamma_{1,2}$ with $\hat \alpha$. Correspondence map $\hat \xi$ extends continuously to these intersections. It descends to the map  $\xi\colon \alpha\to  M_0$ that  satisfies assumptions of the lemma. 
\end{proof}

\section{Equivalent vector fields have $E_\phi$-equivalent parameters}

\begin{lemma}
\label{lem-eqvf}
For irrational $\phi$, let $v_{\rho, \phi}$ be vector fields constructed above.
 Suppose that $\rho_1,\rho_2\notin \{n\phi\}_{n\in \bbZ}$.
  
If vector fields $v_{\rho_1,\phi_1},v_{\rho_2, \phi_2}$ are orbitally topologically equivalent, then $\phi_1=\phi_2 $ and $\rho_1 = \rho_2 +n\phi \mod 1$. 
\end{lemma}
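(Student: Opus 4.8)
The plan is to take a homeomorphism $H\colon T^2\to T^2$, homotopic to the identity, carrying oriented orbits of $v_{\rho_1,\phi_1}$ to oriented orbits of $v_{\rho_2,\phi_2}$, show that $H$ respects the whole separatrix/limit-cycle picture together with its combinatorial labels, then deduce $\phi_1=\phi_2$ from an asymptotic-direction argument in the universal cover, and finally read off the relation between $\rho_1$ and $\rho_2$ from the first-return map to $M_0$. An orbital equivalence sends saddles to saddles, stable (resp.\ unstable) separatrices to stable (resp.\ unstable) ones — the latter since time orientation is preserved — saddle connections to saddle connections, and periodic orbits to periodic orbits. As the only periodic orbit of $v_{\rho,\phi}$ is the limit cycle inside $U_2$, among the two disks cut out by saddle connections the one containing a periodic orbit is distinguished, so $H(\overline{U_2^{(1)}})=\overline{U_2^{(2)}}$ and hence $H(\overline{U_1^{(1)}})=\overline{U_1^{(2)}}$; moreover on $\partial U_j$ lie exactly the saddles $s_j,S_j$, and $s_j$ is characterised as the one whose unique separatrix not contained in $\overline{U_j}$ is stable (namely $\gamma_j$), $S_j$ the one whose such separatrix is unstable ($\Gamma_j$). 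All of this is $H$-invariant, so $H(\gamma_j^{(1)})=\gamma_j^{(2)}$ and $H(\Gamma_j^{(1)})=\Gamma_j^{(2)}$ for the maximally extended separatrices, $j=1,2$. Since $M_0\cap\gamma_1^{(i)}=\{-n\phi_i\}_{n\ge0}$, $M_0\cap\Gamma_1^{(i)}=\{n\phi_i\}_{n\ge1}$, $M_0\cap\gamma_2^{(i)}=\{\rho_i-n\phi_i\}_{n\ge0}$, $M_0\cap\Gamma_2^{(i)}=\{\rho_i+n\phi_i\}_{n\ge1}$, this means $H$ respects the splitting of the exceptional set of the first-return map into the $R_{\phi_i}$-orbit of $0$ (index $1$) and the $R_{\phi_i}$-orbit of $\rho_i$ (index $2$).

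To get $\phi_1=\phi_2$, lift to $\bbR^2$: $H$ lifts to $\tilde H$ commuting with $\bbZ^2$, so $\tilde H=\mathrm{id}+O(1)$. A non-separatrix orbit of $v_{\rho_1,\phi_1}$ meets $M_0$ infinitely often, and between consecutive crossings it runs once around the torus in $y$ with $x$-increment $\phi_1$; hence any lift $\tilde O$ satisfies $\tilde O=(\phi_1 y,y)+O(1)$ and has asymptotic direction $(\phi_1,1)$ as $t\to+\infty$. Its image $H(O)$ is again a non-separatrix orbit of $v_{\rho_2,\phi_2}$ (not periodic, with non-pointwise $\omega$-limit set, so neither the limit cycle nor a node orbit), hence has asymptotic direction $(\phi_2,1)$; but its lift is $\tilde O+O(1)$, so this direction is also $(\phi_1,1)$. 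Thus $(\phi_1,1)$ and $(\phi_2,1)$ are positively proportional, i.e.\ $\phi_1=\phi_2=:\phi$.

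For the parameter, put $\alpha=H(M_0)$ — which, possibly after composing $H$ with a flow map of $v_{\rho_2,\phi}$ (still an orbital equivalence), we may regard as a simple closed curve homotopic to $M_0$, disjoint from $\overline{U_1^{(2)}}\cup\overline{U_2^{(2)}}$ and topologically transverse to $v_{\rho_2,\phi}$. Lemma~\ref{lem-homeo} then furnishes a homeomorphism $\xi\colon\alpha\to M_0$; set $\Psi=\xi\circ H|_{M_0}$, an orientation-preserving circle homeomorphism. Now $H$ conjugates the first-return map of $v_{\rho_1,\phi}$ to $M_0$ (which equals $R_\phi$ off a countable set) to the first-return map of $v_{\rho_2,\phi}$ to $\alpha$, while $\xi$, being the flow-correspondence from $\alpha$ to $M_0$, conjugates the latter to the first-return map to $M_0$ ($=R_\phi$ off a countable set): here one uses that $\alpha$, being transverse to the flow, is crossed always with the same sign, so each once-around orbit arc meets it exactly once. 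Hence $\Psi$ commutes with $R_\phi$ on a co-countable, thus (by continuity) everywhere, so $\Psi=R_c$ for some $c$. Finally, by the first paragraph together with the conclusion of Lemma~\ref{lem-homeo}, $\Psi$ sends the $R_\phi$-orbit of $0$ into $(R_\phi\text{-orbit of }0)\cup\{\rho_2-n\phi\}_{n\ge0}$ and the $R_\phi$-orbit of $\rho_1$ into $(R_\phi\text{-orbit of }\rho_2)\cup\{-n\phi\}_{n\ge0}$. As $R_c$ maps $R_\phi$-orbits onto $R_\phi$-orbits and no full $R_\phi$-orbit lies inside a proper ``half'' of another, $R_c$ must fix the orbit of $0$ and carry the orbit of $\rho_1$ onto that of $\rho_2$; the first forces $c\equiv m\phi$, and then the second yields $\rho_1\equiv\rho_2+n\phi\pmod1$, as asserted.

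The routine part is the separatrix bookkeeping and the final orbit-counting, which use only the explicit combinatorics of $v_{\rho,\phi}$, and the asymptotic-direction computation. The real obstacle is the third step: making rigorous that $\alpha=H(M_0)$ — a priori only a continuous transversal — can be handled as a tame global cross-section to which Lemma~\ref{lem-homeo} legitimately applies, and that the correspondence map $\xi$ genuinely intertwines the first-return maps, i.e.\ that every orbit of $v_{\rho_2,\phi}$ meets $\alpha$ and $M_0$ in interlacing points.
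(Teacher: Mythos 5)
Your proof is correct and follows the same overall strategy as the paper: identify the saddles $s_j,S_j$, the disks $U_j$, and the separatrices $\gamma_j,\Gamma_j$ via $H$, build the circle homeomorphism $\xi\circ H$ from Lemma~\ref{lem-homeo}, deduce it is a rotation, and read off the relation between $\rho_1,\rho_2$ from where it sends separatrix traces on $M_0$. Two genuine differences are worth noting. First, for $\phi_1=\phi_2$ the paper argues more economically: $\xi\circ H$ conjugates the Poincar\'e map $R_{\phi_1}$ of $v_{\rho_1,\phi_1}$ on $M_0$ to the Poincar\'e map $R_{\phi_2}$ of $v_{\rho_2,\phi_2}$ on $M_0$, and conjugate circle rotations have equal rotation numbers. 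Your universal-cover, asymptotic-slope argument is valid but longer, and it duplicates work because you still need $\xi\circ H$ (and its conjugation property) for the last step anyway. Second, you track the full $R_\phi$-orbits $\gamma_j\cup\Gamma_j$ and consequently must contend with the built-in ambiguity in Lemma~\ref{lem-homeo} (that $\xi$ may move a $\Gamma$-point to the wrong $\gamma$), which you resolve with the ``no full orbit inside a half-orbit'' observation. The paper sidesteps this entirely by only tracking $A_1=M_0\cap\gamma_1=\{-n\phi\}_{n\geq 0}$: since $\tilde\gamma_1$ is disjoint from $\tilde\Gamma_1\cup\tilde\Gamma_2$ when $\rho_2\notin\{n\phi\}$, Lemma~\ref{lem-homeo} cleanly gives $\xi(H(A_1))\subset\tilde\gamma_1\cap M_0$, and the half-orbit is already dense, so no ambiguity issue arises. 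Finally, the ``real obstacle'' you flag at the end — whether $\alpha=H(M_0)$ can legitimately be treated as a transversal — is not actually an obstacle: topological transversality of $H(M_0)$ to $v_{\rho_2,\phi_2}$ is automatic because $H$, being an orbital equivalence, carries flow-box charts of $v_{\rho_1,\phi_1}$ to flow-box charts of $v_{\rho_2,\phi_2}$, and $H(M_0)\cap(\overline{U_1}\cup\overline{U_2})=\varnothing$ follows from $H(\overline{U_j})=\overline{\tilde U_j}$; these are precisely the hypotheses Lemma~\ref{lem-homeo} was formulated to accept.
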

\begin{proof}
Let $s_{1,2},S_{1,2}, \gamma_{1,2}, \Gamma_{1,2}, U_{1,2}$ be as defined above for $v_{\rho_1, \phi_1}$, and let $\tilde s_{1,2}, \tilde S_{1,2}, \tilde \gamma_{1,2}, \tilde \Gamma_{1,2}, \tilde U_{1,2}$ be analogous objects for $v_{\rho_2, \phi_2}$. 

Suppose that $H$ is an orbital topological equivalence between $v_{\rho_1,\phi_1}$ and $v_{\rho_2,\phi_2}$. 
Since $H$ takes attractors and repellors of $v_{\rho_1}$ to attractors and repellors of $v_{\rho_2}$ respectively, and limit cycles to limit cycles, we have $H(U_{1})=\tilde U_1$ and $H(U_2)=\tilde U_2$; $H(s_{1,2})=\tilde s_{1,2}$ and $H(S_{1,2})=\tilde S_{1,2}$; therefore  $H({\gamma_{1}})=\tilde \gamma_{1}$ and $H({\gamma_{2}})=\tilde \gamma_{2}$. (Here we used that phase portraits of $v_{\rho, \phi}|_{U_1}$ and $v_{\rho, \phi}|_{U_2}$ are different, otherwise $H$ could map $\gamma_1$ to $\tilde \gamma_2$ and $\gamma_2$ to $\tilde \gamma_1$.)

The curve  $H(M_0)$ is topologically transverse to $v_{\rho_2}$, does not intersect $\tilde U_1\cup \tilde U_2$ and is homotopic to $M_0$, since $H$ is homotopic to identity.  Using Lemma \ref{lem-homeo}, define a homeomorphism $\xi\colon H(M_0)\to M_0$ along trajectories of $v_{\rho_2, \phi_2}$. We get an orientation-preserving circle homeomorphism $\xi\circ H\colon M_0 \to M_0$.
  
Recall that the Poincare maps on $M_0$ under the action of  $v_{\rho_1,\phi_1}$ and $v_{\rho_2, \phi_2}$ equal $x\to x+\phi_1$, $x\to x+\phi_2$ respectively. 
 Since $\xi\circ H$ conjugates these Poincare maps, we have $\phi_1=\phi_2$. From now on, we will omit $\phi_1, \phi_2$ from notation. 
 
Consider the set $A_{1} = (M_0\cap \gamma_1 )=\{-n\phi\}_{n\in \bbZ}\times \{0\}$. The set  $H(A_1)$ belongs to $H(M_0)\cap \tilde \gamma_1$. Since $\rho_2\notin \{n\phi\}_{n\in \bbZ}$, the separatrix $\tilde \gamma_1 $ does not form a separatrix connection with  $\tilde \Gamma_{1,2}$; Lemma \ref{lem-homeo} implies that the set  $\xi(H(A_1))\subset M_0$ belongs to $\tilde \gamma_1$ as well.  

Hence the circle homeomorphism $\xi\circ H$ takes the dense set $A_1=\{-n\phi\}_{n\in \bbZ}\times \{0\}$ into a subset of $\{-n\phi\}_{n\in \bbZ}\times \{0\}$. We conclude that $\xi\circ H$ must be a rotation by $ k\phi$, $k\in \bbZ$. 

On the other hand, analogous arguments for $\gamma_2$ imply that $\xi\circ H$ takes $M_0\cap \gamma_2 =\{\rho_1-n\phi\}_{n\in \bbZ}\times \{0\}$ into a subset of $M_0\cap \tilde \gamma_2 = \{\rho_2-n\phi\}_{n\in \bbZ}\times \{0\}$, hence $\xi \circ H$ must be a rotation by $\rho_2 -\rho_1+ l\phi $, $l\in \bbZ$. We conclude that $ \rho_2-\rho_1 = m\phi \mod 1$.   
\end{proof}

\section{Vector fields with $E_\phi$-equivalent parameters are equivalent}
\begin{lemma}
\label{lem-eqphi}
For irrational $\phi$, let $v_{\rho, \phi}$ be the vector field constructed above.
 Suppose that $\rho_1,\rho_2\notin \{n\phi\}_{n\in \bbZ}$, and $\rho_1 = \rho_2 +n\phi\mod 1$. Then $v_{\rho_1, \phi}$ and $v_{\rho_2, \phi}$ are orbitally topologically equivalent. 
\end{lemma}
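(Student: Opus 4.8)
\emph{Step 1: reduction.} Since $v_{\rho,\phi}$ depends on $\rho$ only modulo $1$, write $\rho_1=\rho_2+n\phi$ with $n\in\bbZ$. If $v_{\rho,\phi}\sim v_{\rho+\phi,\phi}$ holds for every $\rho\notin\{m\phi\}_{m\in\bbZ}$, then chaining these equivalences (and their inverses, for $n<0$) gives $v_{\rho_2,\phi}\sim v_{\rho_2+\phi,\phi}\sim\dots\sim v_{\rho_2+n\phi,\phi}=v_{\rho_1,\phi}$, each intermediate parameter $\rho_2+k\phi$ still lying outside $\{m\phi\}_{m\in\bbZ}$. So it suffices to prove $v_{\rho,\phi}\sim v_{\rho+\phi,\phi}$; from now on I fix $\phi$ and drop it from the notation. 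By construction $v_\rho$ and $v_{\rho+\phi}$ coincide on $\{0\le y\le 0.5\}$ and on the collars of $M_0,M_{0.5}$, where both equal $(\phi,1)$, while on $\{0.5<y<1\}$ one has $v_{\rho+\phi}(x,y)=v_\rho(x-\phi,y)$, i.e.\ $v_{\rho+\phi}=(R_\phi)_*v_\rho$ there, with $R_\phi(x,y)=(x+\phi,y)$. Thus passing from $v_\rho$ to $v_{\rho+\phi}$ merely translates the upper-half configuration $\overline{U_2}\cup\gamma_2\cup\Gamma_2\cup\{s_2,S_2\}$ by $\phi$ in the $x$-direction.

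\emph{Step 2: building the equivalence.} The construction rests on the coincidence that $R_\phi$ is isotopic to the identity and that $\phi$ is exactly the displacement of the Poincar\'e first-return map of $v_\rho$ along $M_{0.5}$ (which is $x\mapsto x+\phi$). Hence ``translate the upper configuration by $\phi$'' and ``reattach it one return-step further along the flow'' describe the same change. I would fix a small compact neighbourhood $\mathcal N\subset\{0.5<y<1\}$ of $\overline{U_2}\cup\gamma_2\cup\Gamma_2\cup\{s_2,S_2\}$, outside of which the flow of $v_\rho$ on $\{0.5<y<1\}$ is a trivial transversal flow with holonomy $x\mapsto x+\phi/2$ onto $M_1$, and take a tube $\mathcal T$ that follows the flow once around the torus, contains $\mathcal N$, and passes through both collars, where it is in the linear normal form $(\phi,1)$. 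The homeomorphism $H$ is the identity off $\mathcal T$, and inside $\mathcal T$ it transports the sub-configuration in $\mathcal N$ one full turn forward --- i.e.\ to the position it occupies for $v_{\rho+\phi}$ --- interpolating back to the identity on $\partial\mathcal T$ with the help of the normal forms in the collars. Because one turn contributes exactly the displacement $\phi$, the transported configuration lands where $v_{\rho+\phi}$ keeps it; after an inessential reparametrisation along orbits (permitted, since orbital equivalence only has to preserve time orientation), $H$ carries orbits of $v_\rho$ to orbits of $v_{\rho+\phi}$, is homotopic to the identity (being supported in a tube and assembled from an isotopy), and is the sought equivalence.

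\emph{The main obstacle.} The delicate point is the interpolation to the identity on $\partial\mathcal T$ --- equivalently, the reason the $\phi$-shift must be threaded once around the torus rather than absorbed in place. If one tries the naive split of $T^2$ along the flow-box meridians $M_0,M_{0.5}$, asking that $H$ be the identity where $v_\rho=v_{\rho+\phi}$ and $R_\phi$ on the upper half, the two recipes clash along $M_{0.5}$; tracking the induced circle homeomorphism through the correspondence maps $x\mapsto x+\phi/2$ forces it to commute with the irrational rotation $R_\phi$, hence to be a rotation, which is then pinned to be the identity by the separatrices $\gamma_1$ and $\gamma_2$ --- impossible when $\phi\notin\bbQ$. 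Circumventing this, so that $H$ is an honest self-homeomorphism of $T^2$ rather than a piecewise-defined map, and is genuinely homotopic to the identity, is the technical heart of the proof; once it is in place, Step~1 upgrades $v_\rho\sim v_{\rho+\phi}$ to the full statement $v_{\rho_1}\sim v_{\rho_2}$ whenever $\rho_1=\rho_2+n\phi\bmod 1$.
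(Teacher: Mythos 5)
Your Step~1 reduction to the single step $v_\rho\sim v_{\rho+\phi}$ is correct: since $\{m\phi\}_{m\in\bbZ}$ is invariant under adding $\phi$, every intermediate parameter $\rho_2+k\phi$ in the chain remains admissible, and this does remove the general-$n$ bookkeeping the paper handles directly. The governing idea of Step~2 --- carry the upper-half configuration once forward along the flow inside a tube that winds around the torus --- is the same idea as the paper's; its sets $V_2$ and $\tilde V_2$ are exactly such tubes.

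The gap is in the specific form you propose for $H$: identity off $\mathcal T$, shift inside. Since $H$ is a bijection that is the identity off $\mathcal T$, we must have $H(\mathcal T)=\mathcal T$, and since $H$ carries the $U_2$-configuration of $v_\rho$ to that of $v_{\rho+\phi}$, $\mathcal T$ must contain \emph{both} configurations; moreover, so that ``identity'' really does take $v_\rho$-orbits to $v_{\rho+\phi}$-orbits off $\mathcal T$, those two vector fields must coincide off $\mathcal T$, which again forces $\mathcal T$ to absorb both configurations. Their $M_0$-footprints sit at $\{\rho,\rho+\phi\}$ and $\{\rho+\phi,\rho+2\phi\}$, so a once-around flow-annulus containing both has a meridian footprint of width comparable to $\phi$, not small; for an open set of $\rho$ this footprint must sweep across the footprint of $U_1$, and a flow tube cannot engulf the saddle configuration of $U_1$. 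So the tube you describe simply does not exist for all admissible parameters, and the interpolation you defer in the ``main obstacle'' paragraph is precisely the part that fails.

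The paper avoids this by \emph{not} demanding that $H$ be the identity off a single tube. It cuts $T^2$ into three pieces: $V_1$, a thin (width $2\eps$) flow tube owning $\overline U_1$; $V_2$, a thin flow tube owning $\overline U_2$ and running from $I_0$ to $I_n$; and $V_3$, the rest. On $V_2$ it transports the $U_2$-bump from one end of the tube to the other. On $V_1$ and on $V_3$ the map $H$ is pinned to be the identity only on arcs of $M_0$ and is otherwise the continuation along trajectories of $v_{\rho_1}$ and $v_{\rho_2}$; in particular $H$ is \emph{not} the identity in $\{0.5<y<1\}\cap V_3$, since the two flows differ there. Because each tube has width $2\eps$ rather than $\phi$, nothing is ever forced into $U_1$. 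Your proposal correctly diagnoses where the difficulty sits but does not supply the piecewise construction that resolves it; as written, it is an incomplete sketch rather than a proof.
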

\begin{proof}
We will write $v_{\rho}$ instead of $v_{\rho, \phi}$ for brevity. 
 Let $\eps$ be small so that the intervals $I_k=[\rho_2+k\phi-\eps, \rho_2+k\phi+\eps]=[\rho_1-(n-k)\phi-\eps, \rho_1-(n-k)\phi+\eps]$ do not intersect for $k=0,1,\dots,n$ and do not intersect $J=[-\eps, \eps]$.

Let $v_{\rho_1}$ be defined on the torus $T$, and $v_{\rho_2}$ be defined on the torus $\tilde T$.  We split $T$ into a union  of the following three connected sets: 
  \begin{itemize}
\item  (1) $V_1$. Let $V_1^1$ be the union of  arcs of trajectories of $v_{\rho_1}$ that start at $(x,0), x\in [-\eps, \eps] =J$, and end at $(x+\phi,0)$. Let $V_1 = \overline U_1 \cup \overline {V_1^{1}}$.

\item (2) $V_2$. Let $V_2^1$ be the union of  arcs of trajectories of $v_{\rho_1}$ that start at $(x,0), x\in I_0$, and end at $(x+n\phi,0)\in I_{n}$. Let $V_2 =  \overline U_2 \cup \overline {V_2^{1}}$.
  
  \item (3) $V_3 = T\setminus V_1\setminus V_2$.  
  \end{itemize}
  The set  $V_1$ contains the arc of $\gamma_1$ before its first intersection with $M_0$ that belongs to $J$. The set $V_2$ contains the arc of $\gamma_2$ and its 1st, 2nd, $\dots$, $n$th intersections with $M_0$. Intersections happen inside $I_{n-1}$, $\dots$, $I_0$ respectively.
   
Similarly, we define sets $\tilde V_1, \tilde V_2, \tilde V_3$ for $v_{\rho_2}$, using the same intervals $I_0,\dots I_n,J$ for the field $v_{\rho_2}$. Again, the set  $\tilde V_1$ contains the arc of $\tilde \gamma_1$ before its first intersection with $M_0$ that belongs to $J$. The set $\tilde V_2$ contains the arc of $\gamma_2$ before its first intersection with $M_0$ that belongs to $I_0$, and the arc of $\Gamma_2$ that contains its 1st, 2nd, $\dots$, $(n-1)$-th intersections with $M_0$. These intersections happen inside $I_1$, $I_2$, $\dots$, $I_n$ respectively.
   Fig. \ref{fig-equiv} shows domains $V_1$ -- $V_3$ and $\tilde V_1$ -- $\tilde V_3$.
\begin{figure}
\includegraphics[width=0.7\textwidth]{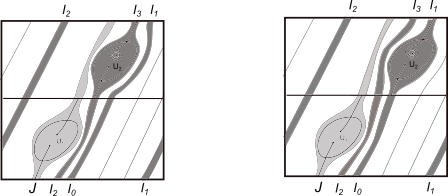}
\caption{Domains $V_1$ (light-gray), $V_2$ (dark-gray) and $V_3$ (white) for vector fields $v_{\rho_1}$ (left) and $v_{\rho_2}$ (right) for $n=3$. }\label{fig-equiv}
\end{figure}
Now, construct $H$.

Clearly,  $v_{\rho_1}|_{V_1}$ is orbitally topologically equivalent to $v_{\rho_2}|_{\tilde V_1}$. 
We will choose equivalence $H$ that is identical  on $J\times \{0\}$ and $(J+\phi)\times \{1\}$. 

Vector fields $v_{\rho_1}|_{V_2}$ and $v_{\rho_2}|_{\tilde V_2}$ are also orbitally topologically equivalent.   We will choose $H$ that is identical on the bottom and top sides $ I_0\subset M_0$,  $I_n \times \{1\}$ of $V_2$. 

Finally, $V_3\setminus M_0$ is a union of strips where $v_{\rho_1}$ is orbitally topologically equivalent to the unit vector field; the same holds for $v_{\rho_2}$ in $\tilde V_3\setminus M_0$. Thus we can extend $H$ to $V_3$ by setting $H$ to be identity on $M_0\cap \overline V_3= M_0\setminus (J\cup I_1\cup I_2\cup \dots I_{n-1})$ and extending it along trajectories of $v_{\rho_1},v_{\rho_2}$. This completes the construction of the orbital topological equivalence.  
\end{proof}

Lemmas \ref{lem-eqvf} and \ref{lem-eqphi} imply Theorem \ref{th-family}. Due to Sec. \ref{sec-prelim}, this completes the proof of Theorem \ref{th-main}. 

\section{Genericity of nonclassifiable vector fields}
In this section, we prove a stronger version of Theorem \ref{th-main}: vector fields that are orbitally topologically equivalent to $v_{\rho, \phi}$ form (at least) a codimension-7 submanifold $\mathcal M$ in the space $\mathcal V^2(T^2)$ of smooth vector fields.  This implies that they appear in generic smooth 7-parameter families of vector fields. 
\begin{theorem}
\label{th-gen}
For any irrational $\phi$, there exists a codimension-7 continuous submanifold $\mathcal M\subset \mathcal V^2(T^2)$  such that any vector field $v\in \mathcal M$ is orbitally topologically equivalent to some vector field of the form $v_{\rho,\phi}$ described in Theorem \ref{th-main}. 
\end{theorem}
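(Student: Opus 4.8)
The plan is to exhibit $\mathcal M$ as the set of vector fields obtained from members $v_{\rho,\phi}$ of the family by an arbitrary (small or large) orbital topological equivalence, and then identify the finitely many local conditions that cut $\mathcal M$ out as a continuous submanifold of codimension $7$. The natural strategy is: first, fix a model $v_{0,\phi}$ and count the ``structurally unstable'' features of its phase portrait; second, show that a vector field is orbitally topologically equivalent to some $v_{\rho,\phi}$ precisely when it reproduces exactly those features (and no others); third, translate ``reproducing those features'' into transversality/codimension statements at the relevant singularities and separatrix connections.

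\textbf{Step 1: counting the degeneracies.} In each of the two bands $\{0<y<1/2\}$ and $\{1/2<y<1\}$, the field $v_{\rho,\phi}$ has two hyperbolic saddles $s_i, S_i$ joined by \emph{two} saddle connections; each saddle connection is a codimension-$1$ phenomenon. That gives $2\times 2 = 4$ conditions. In the band $\{1/2<y<1\}$, inside $U_2$ there is in addition a limit cycle together with a node inside it; a semistable-type or otherwise non-generic cycle contributes, and the precise bookkeeping here is what must be done carefully — the claim $7 = 4 + 3$ suggests the three remaining conditions come from the cycle configuration in $U_2$ (or, alternatively, from requiring the three relevant separatrices $\gamma_1$, $\gamma_2$, $\Gamma_i$ to hit the meridians at the prescribed mutual positions so that the Poincaré map is an honest rigid rotation). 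I would enumerate these explicitly and verify they are independent.

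\textbf{Step 2: the characterization.} I would prove that $v$ is orbitally topologically equivalent to some $v_{\rho,\phi}$ if and only if: (a) $v$ has exactly four hyperbolic saddles $s_i, S_i$ and the same node/cycle attractor–repeller configuration inside the two regions $U_1, U_2$; (b) the four saddle connections listed above are present; (c) the Poincaré first-return map of $v$ on a transverse meridian is (topologically conjugate to) the rotation $x\mapsto x+\phi$, which, given that $\phi$ is irrational and that Denjoy-type rigidity is available for the actual analytic/smooth model, forces the induced circle map to be exactly a rotation — this fixes the value of $\phi$ and hence determines $\rho$ up to $E_\phi$. The ``if'' direction is essentially Lemma \ref{lem-eqphi} applied to the conjugating homeomorphism produced by the standard correspondence-map argument of Lemma \ref{lem-homeo}; the ``only if'' direction is Lemma \ref{lem-eqvf} together with the observation that orbital equivalence preserves the number and type of singularities, separatrix connections, and limit cycles.

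\textbf{Step 3: codimension and manifold structure; the main obstacle.} Once the characterization is in hand, $\mathcal M$ is the preimage of a point under a ``defining'' map whose $7$ component functions are: the $4$ saddle-connection splitting functions (each the signed distance between the two relevant separatrices measured on a transversal) and the $3$ functions governing the cycle/node block in $U_2$. Each is a $C^2$ (indeed continuous in the $C^2$ topology) functional on $\mathcal V^2(T^2)$ with surjective derivative at points of $\mathcal M$ — surjectivity of each splitting functional is the classical fact that a saddle connection can be broken transversally by a localized perturbation supported near the connecting arc, and these perturbations can be chosen with disjoint supports, giving joint transversality. An implicit-function-theorem argument in Banach (or Fréchet, via a tame/Nash–Moser-free formulation since we only need continuity of the chart) spaces then presents $\mathcal M$ locally as a graph, i.e.\ a continuous codimension-$7$ submanifold. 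The main obstacle, and the place where real work is needed, is Step 1 combined with the genericity half of Step 2: one must be sure that the list of $7$ conditions is \emph{complete} — that no further hidden degeneracy of $v_{\rho,\phi}$ is being silently imposed (e.g.\ the precise multiplicity of the limit cycle, or tangencies of separatrices with the meridians) — and simultaneously that these $7$ conditions are \emph{independent}, so the codimension is exactly $7$ and not larger. Verifying independence amounts to constructing, for each of the $7$ functionals, an explicit perturbation that changes it while leaving the other $6$ unchanged, which is routine but must be carried out; pinning down completeness requires a careful local normal-form analysis near each saddle, near the nodes, and near the cycle in $U_2$, and that is the genuinely delicate part of the argument.
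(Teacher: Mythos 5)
Your proposal correctly identifies four of the seven conditions (the four separatrix-connection splitting functionals), and your general strategy of expressing $\mathcal M$ as a level set and invoking transversality is sound. But the remaining three conditions are misidentified, and this is not a bookkeeping quibble — it hides the single most important analytic ingredient of the proof.

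The three remaining conditions in the paper are not about the limit cycle in $U_2$ (which is hyperbolic, hence structurally stable and contributes no codimension) nor about tuning the positions of separatrix intersections. They are: two conditions $\mu_1(v)\nu_1(v)=1$ and $\mu_2(v)\nu_2(v)=1$ on the products of the characteristic numbers of the saddle pairs, and one condition $\rot(P_v)=\phi$ on the Poincar\'e return map. The characteristic-number conditions are what allow the argument to close. Once the four separatrix connections are preserved, the Poincar\'e map $P_v$ on the meridian $M_0$ is, by Lemma \ref{lem-Du}, a composition of Dulac maps whose behavior near the two break points $A(v),B(v)$ is $x\mapsto c\,x^{\mu_i\nu_i}(1+o(1))$. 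If $\mu_i\nu_i\neq 1$, the map has a genuine critical point or an infinite-derivative point, and the Denjoy-type rigidity needed in Step 3 — conjugating $P_v$ to the rigid rotation $R_\phi$ — simply fails (there is no off-the-shelf Denjoy theorem for circle maps with non-symmetric critical points, as the paper's final remark also observes). Imposing $\mu_i\nu_i=1$ forces $P_v$ to be a $C^2$ circle homeomorphism with two break points, for which the Denjoy theorem with breaks (\cite{DzLiMa}) does apply and produces the conjugating homeomorphism $\xi$, which is then used to build the orbital equivalence. Your proposal never isolates this obstruction and instead speculates that the three extra conditions ``come from the cycle configuration in $U_2$,'' which is not correct.

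A second, smaller gap: the seventh condition, $\rot(P_v)=\phi$, is not a smooth transversality condition — the rotation number is only continuous as a function of the vector field, and the paper must argue separately (using a one-parameter family $R_t^*v$ of rotated vector fields and monotonicity of the rotation number) that $\mathcal M$ is a \emph{continuous} graph of codimension $1$ inside the smooth codimension-$6$ manifold $\mathcal M^1$. Your Banach/Fr\'echet implicit-function sketch treats all seven functionals uniformly and would not produce this; it would at best give you $\mathcal M^1$, not $\mathcal M$. Finally, your Step 2 proposes a full characterization (``if and only if'') of vector fields equivalent to some $v_{\rho,\phi}$; the theorem only needs and only proves the forward inclusion $\mathcal M\subset\{v:\ v\sim v_{R,\phi}\text{ for some }R\}$, and proving the converse would be substantially harder and is not required.
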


To define the set $\mathcal M$, we will need the notion of the rotation number. Let $f\colon \bbR/\bbZ\to \bbR/\bbZ$ be a circle homeomorphism, and let $F\colon \bbR\to \bbR$ be its lift to the real line. The rotation number of the circle homeomorphism $f$ is given by $$\rot f = \lim_{n\to \infty}\frac{F^n(x)}{n}.$$
The rotation number is rational if and only if $f$ has a periodic orbit and depends continuously on $f$. In a 1-parameter family $f_t$, if $\frac {d}{dt} f_t>0$, then the rotation number is monotonic with respect to $t$, and strictly monotonic whenever $\rot (f_t)$ is irrational.

We will also need the notion of the characteristic number of a saddle. 
Recall that if $\lambda_1<0<\lambda_2$ are the eigenvalues of the linearization matrix of a vector field at a saddle singular point, then $ |\lambda_1|:|\lambda_2|$ is called the characteristic number of a saddle. It is invariant under smooth changes of space and and time variables.  If $L_1, L_2$ are transversals to the stable and unstable separatrices of a saddle of a vector field $v$, then the correspondence map along $v$ from $L_1$ to $L_2$  is defined on semi-transversals; let these transversals be given by $\{x>0\}$ and $\{y>0\}$ in local coordinates on $L_1, L_2$. This correspondence map is called the  Dulac map. The following lemma is known to specialists. 
\begin{lemma}
\label{lem-Du}
Dulac map for the saddle with characteristic number $\mu$ has the form $x\to cx^{\mu}(1+o(1))$ on a neighborhood of zero where $c$ is a nonzero constant. 
\end{lemma}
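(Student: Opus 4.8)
The statement to prove is Lemma~\ref{lem-Du} on the form of the Dulac map.

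\smallskip

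\textbf{Plan.} The strategy is to reduce a saddle with characteristic number $\mu=|\lambda_1|/|\lambda_2|$ to a normal form to which the computation is transparent, then read off the asymptotics. First I would rescale time so that the eigenvalues become $\lambda_1=-\mu$ and $\lambda_2=1$ (this does not change the characteristic number nor the Dulac map up to the identifications allowed in the statement). Next, I would straighten the separatrices: by the stable/unstable manifold theorem the two separatrices are smooth, so a smooth change of coordinates puts them on the coordinate axes, and the vector field takes the form $\dot x = -\mu x + x\,p(x,y)$, $\dot y = y + y\,q(x,y)$ with $p,q$ smooth and vanishing at the origin (so that the axes are invariant and the linear part is diagonal). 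The transversals $L_1\subset\{x>0\}$ and $L_2\subset\{y>0\}$ can, again after a smooth coordinate change, be taken to be $\{y=1\}$ (near the stable separatrix $\{x=0\}$) and $\{x=1\}$ (near the unstable separatrix $\{y=0\}$), so the Dulac map sends $(x,1)$, $x>0$ small, to $(1,y)$ for the corresponding $y>0$ small.

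\smallskip

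\textbf{Key step: the estimate along orbits.} Along a trajectory starting at $(x_0,1)$ with $x_0$ small, I would track the ratio $d/dt\big(\ln x\big)=-\mu+p$ and $d/dt\big(\ln y\big)=1+q$. Integrating from the initial time to the exit time $T$ (the time at which $y$ reaches $1$), one gets $\ln(1) - \ln(x_0) \cdot(\text{nothing})$; more precisely $\ln x(T)-\ln x_0 = -\mu T + \int_0^T p\,dt$ and $\ln y(T)-\ln 1 = T+\int_0^T q\,dt$, i.e. $0 = T+\int_0^T q\,dt$ wait---rather $\ln y(T)=0$ gives $T = -\int_0^T q\,dt$, hmm. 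Let me instead run time so the orbit goes from $L_1$ to $L_2$: $y$ increases from a small value to $1$ is the wrong direction; the Dulac map as defined in the paper goes from a transversal to the stable separatrix to a transversal to the unstable one, so $y$ goes from $1$ down is also wrong. The clean way: parametrize so $x$ decreases from $x_0$ to the exit value and $y$ increases to $1$; then $T\to+\infty$ as $x_0\to 0$, and dividing the two integral identities gives $\ln(x(T)) = \mu \ln(y(T)) + \ln x_0 - \mu\cdot 0 + \int_0^T(p-\mu q)\,dt$ after eliminating $T$. Since $p,q\to 0$ near the origin and the orbit spends almost all its time near the origin, the error integral $\int_0^T(p-\mu q)\,dt$ converges to a finite limit depending continuously on $x_0$ (this uses that $p,q$ are Lipschitz and vanish at $0$, so $|p|,|q|\le C(|x|+|y|)$ and the exponential in/out rates make the integral absolutely convergent, uniformly in $x_0$). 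Exponentiating yields $x(T) = c(x_0)\, x_0^{\mu}\,(\text{bounded away from }0\text{ and }\infty)$, and a further argument shows $c(x_0)\to c\neq 0$ as $x_0\to 0$, giving the claimed form $x_0\mapsto c x_0^{\mu}(1+o(1))$ after relabeling which coordinate is the ``input''.

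\smallskip

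\textbf{Main obstacle.} The genuinely delicate point is controlling the error term $o(1)$ uniformly: one must show not merely that each orbit converges but that the correction $\int_0^T(p-\mu q)\,dt$ has a limit as $x_0\to 0$ and that the convergence of $c(x_0)$ to a nonzero constant is uniform, so that the remainder is genuinely $o(1)$ and not merely $O(1)$. This is handled by the classical Dulac estimates: one works in a small bidisc, uses Gr\"onwall-type bounds to show the orbit from $(x_0,1)$ stays $O(x_0^{\delta})$-close to the boundary of the bidisc for most of its transit and spends time $T \sim -\mu^{-1}\ln x_0$ near the origin, and then bounds the tails of the error integral by $O(x_0^{\delta})$ for some $\delta>0$. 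Since the statement only claims the $(1+o(1))$ form (not a full asymptotic expansion), this level of care suffices; I would cite the standard references on Dulac maps for the precise uniform bounds rather than reproduce them, as the lemma is explicitly stated to be known to specialists.
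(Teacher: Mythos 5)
Your coordinate normalization has the stable and unstable separatrices mislabeled, and this mistake propagates into a wrong starting point for the Dulac-map orbit. With $\dot x = -\mu x + xp$, $\dot y = y + yq$, the stable manifold is $\{y=0\}$ (tangent to the $x$-axis, where $\dot x < 0$) and the unstable manifold is $\{x=0\}$; you have these reversed. Consequently the orbit you launch from $(x_0,1)$ with $x_0$ small has $\dot x<0$ and $\dot y>0$, so both coordinates move away from the target transversals and the trajectory never defines a correspondence. With the orientations fixed, your integral identity should read $\ln x(T) = \ln x_0 - \mu \ln y(T) + \int_0^T (p + \mu q)\,dt$, with opposite signs from what you wrote. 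You noticed something was off mid-argument but did not resolve it cleanly.

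Beyond these fixable errors, your route diverges from the paper's in an interesting but incomplete way. The paper first reparametrizes time so that the equation becomes $\dot x = x$ exactly (pushing all the nonlinearity into $y' = yg(x,y)$ with $g(0,0)=-\mu$); this makes the transit time $T=-\log x_0$ exact, with no error term in $x$. It then rescales to an $\eps$-box where the correction to $g$ is $O(\eps)$, which gives $y(T) = C(x_0)x_0^\mu$ with $|C(x_0)-1| = O(\eps)$ uniformly; composing with the smooth correspondence maps to $L_1, L_2$ and letting $\eps\to 0$ yields $cx^{\mu}(1+o(1))$. You instead keep both nonlinear corrections and aim to show $\int_0^T(p+\mu q)\,dt$ converges uniformly as $x_0\to 0$ via a Gr\"onwall-type tail bound. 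That is a legitimate alternative, but the convergence and the nonvanishing of the limit constant $c$ are precisely the content of the lemma, and you defer them to ``classical Dulac estimates'' rather than establishing them. As written, the proposal identifies the correct obstacle but does not overcome it, and the setup errors would first need correction.
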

\begin{proof}
The proof repeats the computation in the proof of Lemma 5 in \cite{IKS}.  Change variables  so that saddle separatrices become coordinate axes, and $L_1, L_2$ become $\{y=1\} $ and $\{x=1\}$ respectively.  Differential equation takes the form $x'=xg_1(x,y), y'=yg_2(x,y)$. After time change, equation becomes $x'=x, y'=yg(x,y)$, with $g$ smooth, $g(0,0)=-\mu$. Rescaling $x\to x/\eps$, $y\to y/\eps$ in an $\eps$-neighborhood of zero brings the equation to the form $ x'=x,  y'= y\tilde g( x, y)$ with $|\tilde g( x, y)-g(0,0)|<O(\eps)|x|+O(\eps)|y|$. Trajectory of the new vector field that starts at $(x_0, 1)$ has the form $x(t)=x_0e^t$, $\log y(t) = \int_0^t \tilde g(x,y )dt$. Hence it takes the time $T=-\log x_0$ for this trajectory to land on the transversal $(1,y)$. An estimate on $\tilde g(x,y)$ above implies that $ y(T)=C(x_0)e^{-\mu T}  = C(x_0)\cdot x_0^\mu$ with $1-O(\eps)<C(x)<1+O(\eps)$. Thus the Dulac map along the initial vector field $v$ between $ \{y=\eps\}$ and $\{x=\eps\}$ has the form $y=c(x)\cdot x^\mu$ with $1-O(\eps)<c(x)/c_0<1+O(\eps)$ for certain $c_0$. Since the correspondence map between $L_1=\{y=1\}$ and $\{y=\eps\}$ is smooth, as well as the correspondence map between $L_2=\{x=1\}$ and $\{x=\eps\}$, we conclude that on a sufficiently small neighborhood of zero on $L_1$,  the Dulac map from $L_1$ and $L_2$ has the form $y=d(x)\cdot x^\mu$ with $1-O(\eps)<d(x)/d_0<1+O(\eps)$ for certain $d_0$. Since $\eps$ was arbitrary, this implies the statement.  
\end{proof}

\begin{proof}[Proof of Theorem \ref{th-gen}]
\textbf{Construction of $\mathcal M$.} 

Take a vector field $v_{\rho, \phi}$ with small $\rho$. Modify it is needed to guarantee that on a neighborhood of $\{x=0\}$, we have  $v_{\rho,\phi}=(1,\phi)$. Consider its small neighborhood $\mathcal U$ in the space $\mathcal V^2(T^2)$ of $C^2$-smooth vector fields in $T^2$. Let $s_1(v), s_2(v), S_1(v), S_2(v)$ be saddles of $v$, $v\in \mathcal U$, that are close to $s_1, s_2,  S_1, S_2$. Let $l_1$ be an interval transverse to the left separatrix connection of $s_1$ and $S_1$ of $v_{\rho, \phi}$; let  $\alpha(v)$ and $\beta(v)$ be first intersections of separatrices of $s_1(v), S_1(v)$ with $l_1$, in a local chart on $l_1$. Define $\delta_1(v)= \alpha(v)-\beta(v)$, which is a smooth function of $v$. In a similar way, define functions $\delta_2(v),\delta_3(v), \delta_4(v)$ for each of the separatrix connections of $v$. 
Note that we have $\delta_k(v_{\rho, \phi})=0$, $k=1, 2, 3, 4$, since $v_{\rho, \phi}$ has four separatrix connections. 
Define $$\mathcal M^0=\{v\in\mathcal V^2(T^2)\mid \delta_k(v)=0, k=1,2,3,4\}.$$

Denote the characteristic numbers of the saddles $s_1(v),s_2(v), S_1(v), S_2(v)$ by $\mu_1(v), \mu_2(v), \nu_1(v), \nu_2(v)$ respectively.   

For any $v\in \mathcal M^0$, let $P_v$ be the Poincare map along $v$ from $M_0$ to itself. Formally, $P_v$ is undefined at the first intersections of  separatrices $\gamma_1(v)$, $\gamma_2(v)$ with $M_0$, but it extends continuously to these points. Denote these points $A(v), B(v)$. 
On the left semi-neighborhood of $A(v)$, the map $P_v$ is a composition of two Dulac maps, from $M_0$ to $l_1$ and from $l_1$ to $M_0$. Similarly, $P_v$ is a composition of two Dulac maps on the right semi-neighborhood of $A(v)$.  Due to Lemma \ref{lem-Du},  $P_v$ has the following form: $$
\begin{aligned}x \to P_v(A(v))+ C_1(v) \cdot (x-A(v))^{\mu_1(v)\cdot \nu_1(v)}(1+o(1))\text{ for } x<A(v)\\
x \to P_v(A(v))+ C_2(v) \cdot (x-A(v))^{\mu_1(v)\cdot \nu_1(v)}(1+o(1)) \text{ for }x>A(v)\end{aligned}.$$
Note that constants $C_1(v)$ and $C_2(v)$ do not necessarily coincide. 
Since the Poincare map from $M_0$ to itself along $v_{\rho, \phi}$ is identity, we get that $\mu_1(v_{\rho, \phi})\cdot \nu_1(v_{\rho, \phi})=1$; similarly, $\mu_2(v_{\rho, \phi})\cdot \nu_2(v_{\rho, \phi})=1$.

Let  $\mathcal M^1\subset \mathcal U$ be given by  $$\mathcal M^1= \{v\in \mathcal M^0 \mid \mu_1(v)\cdot \nu_1(v)=1, \mu_2
(v)\cdot \nu_2(v)=1\}.$$ Then $\mathcal M^1$ is  a codimension-6 smooth submanifold in $\mathcal V^2(T^2)$. 
Condition on characteristic numbers implies that $P_v$ has nonzero one-sided derivatives on both sides of $A(v), B(v)$ for all $v\in \mathcal M^1$. So  $P_v$ is a $C^2$-smooth circle homeomorphism with two break points $A(v), B(v)$. 

Finally, the set $\mathcal M$ is given by  $$\mathcal M= \{v\in \mathcal M^1, \rot (P_v)=\phi\}.$$ 

\textbf{Codimension of $\mathcal M$.}

We will prove that $\mathcal M$ is a codimension-1 continuous submanifold  (i.e. the graph of a continuous function) in $\mathcal M^1$. Indeed, let $R_t\colon \bbR^2\to \bbR^2$ be the counterclockwise rotation by the angle $t$. Let $v=(v^1, v^2)$, and let $$\mathcal L=\{v\in \mathcal M^1 \mid \frac{v^1(0)}{v^2(0)} = \frac{v^1_{\rho, \phi }(0)}{v^2_{\rho,\phi}(0)}\}.$$ Since on a neighborhood of a boundary of the unit square,  we have $v_{\rho, \phi}=(1,\phi)$, the vector field $R_t^* v$ is well-defined on the torus for small $t$. Moreover, $v\in \mathcal M^1$ implies $R_t^* v \in \mathcal M^1$ for small $t$, therefore the set $\mathcal M^1$ can be locally represented as a Cartesian product $\bbR\times \mathcal L$ in the smooth chart $(t, v)\to R_t^* v (x,y)$. 

We have $\frac{d}{dt} P_{R_t^* v_{\rho,\phi}}>0$ due to the construction of $v_{\rho,\phi}$, thus $\frac{d}{dt} P_{R_t^* v}>0$ for $v$ close to $v_{\rho,\phi}$. Properties of the rotation number imply that for any $v$, the set  $\mathcal M$ intersects each fiber $R_t^* v$ on a single point, i.e. $\mathcal M$ is a graph in $\mathcal M^1$.  Since $\rot(\cdot)$ is continuous, $\mathcal M$ is the graph of the continuous function. Hence $\mathcal M$ is a continuous manifold of codimension $7$. 

\textbf{Finding orbitally topologically equivalent $v_{R,\phi}$.} 

For any $v\in \mathcal M$, we will find $R$ such that $v$ is orbitally topologically conjugate to $v_{R,\phi}$.
It is an easy generalization of a classical Denjoy theorem that a circle homeomorphism with breaks that has irrational rotation number is continuously conjugate to the rotation $x\to x+\rot(f)$ (see e.g. \cite[Theorem 2.4]{DzLiMa}). Let $\xi\colon \bbR/\bbZ\to \bbR/\bbZ$ conjugate $P_v$ to the rotation $x\to x+\phi$. Post-composing $\xi$ with rotation, we may and will assume that $\xi(A(v))=0$. Let $$R=\xi(B(v)).$$

\textbf{Constructing conjugacy.}

Now, we will prove that $v\in \mathcal M$ is orbitally topologically conjugate to $v_{R, \phi}$. Let $\gamma_1^R$, $\gamma_2^R$ denote separatrices of the vector field $v_{R, \phi}$. 

 Lift $v,v_{R,\phi}$ to vector fields $\tilde v, \tilde v_{R,\phi}$ in closed cylinders $C=\bbR/\bbZ\times [0,1]$. Comparing phase portraits, we can see that  $\tilde v$ is orbitally topologically equivalent to $\tilde v_{R,\phi}$ on $C$. Choose topological equivalence  $H$ to coincide with $\xi$ on the lower boundary of $C$, $H(x, 0) = (\xi(x),0)$. This is possible since $\xi$ takes $A(v)$ to $0$ and $B(v)$ to $R$, i.e. matches intersection points of separatrices of $\tilde v$ with $M_0$ to the  intersection points of separatrices of $\tilde v_{R,\phi}$ with $M_0$.
  
   Since $H$ maps trajectories of $\tilde v$ to trajectories of $\tilde v_{R, \phi}$, we get  $H(P_v(x),1) = (\xi(x)+\phi, 1)$ on the upper boundary of $C$. Hence  $H(x,1) = (\xi(P_v^{-1}(x))+\phi, 1) = (\xi(x)-\phi+\phi, 1) = (\xi(x),1)$. Since $H(x,0) = (\xi(x),0)$ and $H(x,1)= (\xi(x),1)$, the map   $H$ descends to a continuous map on $T^2$. This completes the proof.

\end{proof}
\begin{remark}
We could simplify the family $v_{\rho,\phi}$ by replacing two saddles in $U_1$ and/or $U_2$ with a single saddle that has a separatrix loop (cf. the next section), thus improving the codimension. However, in this case, the Poincare map will be necessarily critical with non-symmetric critical points. Such circle maps are not well-studied, and we could not find a reference to the analogue of the Denjoy theorem that applies to this case.  
\end{remark}

\section{Analytic vector fields}

In this section, we prove Theorem \ref{th-an}. We provide an explicit analytic family with no complete numerical invariants;  the proof is computer-assisted. 

Consider the family of Hamiltonian vector fields $$v_{\phi, b, c, d}= (\frac{d}{dy} u_{\phi, b, c, d}, -\frac{d}{dx} u_{\phi, b, c, d})
$$
 with the Hamiltonian 
\begin{equation}\label{eq-H}
u_{\phi, b, c, d}(x,y)=x-\phi y+(\cos y-1)(b\sin(x-y)+c\sin(x)+d\cos(y)).
\end{equation}
We will prove the following theorem; it implies Theorem \ref{th-an} due to Proposition \ref{prop-Ephi}. 
\begin{theorem}
\label{th-analytic}
For some open interval  $I\subset \bbR/\bbZ$, the equivalence relation $E_\phi$ on $I\setminus \{n\phi\}_{n\in \bbZ}$ is Borel reducible to the orbital topological equivalence on a subset of the family  $v_{\phi, b, c, d}$. 

Namely, there exist analytic functions $D$ and $\rho$ defined on an open set $V$ in $\bbR^3$, such that  the function $\rho$ is non-constant on $c$ for fixed $b,\phi$, and two vector fields $v_1=v_{\phi_1, b_1, c_1,D(\phi_1, b_1, c_1)}$ and $v_2=v_{\phi_2, b_2, c_2, D(\phi_2, b_2, c_2)}$ for $(\phi_{1,2}, b_{1,2}, c_{1,2})\in U$,   irrational $\phi_1, \phi_2$, and   $\rho(\phi_1, b_1, c_1)\notin \{n\phi_1\}_{n\in \bbZ}$, $\rho(\phi, b_2, c_2) \notin \{n\phi_2\}_{n\in \bbZ}$ are orbitally topologically equivalent if and only if $\phi_1=\phi_2=\phi$ and $\rho(\phi, b_1, c_1) = \rho(\phi, b_2, c_2) \mod n\phi$ in $\bbR/\bbZ$. 
\end{theorem}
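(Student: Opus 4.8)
\textbf{Proof plan for Theorem \ref{th-analytic}.}

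The plan is to show that the explicit analytic family $v_{\phi,b,c,d}$ contains, for a suitable choice of the fourth parameter $d = D(\phi,b,c)$, a subfamily that behaves exactly like the model family $v_{\rho,\phi}$ of Theorem \ref{th-family}, and then invoke Lemmas \ref{lem-eqvf} and \ref{lem-eqphi} (or rather their proofs, which only used the structure of separatrices and the Poincaré map). First I would analyze the Hamiltonian $u_{\phi,b,c,d}$ and its level sets. Since $v_{\phi,b,c,d}$ is Hamiltonian with $u = x - \phi y + (\cos y - 1)(\dots)$, the function $u$ is not periodic in $x$ or $y$ but its differential $du = dx - \phi\, dy + d[\dots]$ is, so the vector field descends to the torus, and trajectories lie on curves where the multivalued function $u$ is locally constant. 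On the lines $y \in 2\pi\bbZ$ (i.e. the meridians $M_0$, using the correct normalization $T^2 = \bbR^2/(2\pi\bbZ)^2$ or rescaling to $\bbZ^2$), the factor $\cos y - 1$ vanishes together with its $x$-derivative, so there $v = (1, \phi)$ — this reproduces the "$v = (\phi,1)$ near the meridians" feature of the model and forces the Poincaré return map on $M_0$ to be the rigid rotation $x \mapsto x + \phi$ wherever it is defined. Thus the rotation-number argument from Lemma \ref{lem-eqvf} giving $\phi_1 = \phi_2$ carries over verbatim.

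Next I would locate the singular points. Equilibria are critical points of $u$; one expects two families of saddles emerging near $y = \pi$ (where $\cos y - 1 = -2$ is most negative) whose stable/unstable separatrices, being level curves of $u$, generically either cross $M_0$ or form separatrix connections. The key analytic input — and here is where the computer assistance enters — is to choose $d = D(\phi,b,c)$ so that \emph{two} of the four potential separatrix connections are realized exactly (the connections bounding the "$U_1$"-type region), i.e. so that two saddle separatrices land on the same level curve of $u$. This is one equation in the one free parameter $d$; I would verify by rigorous interval arithmetic (following the style of Lemma's proof in \cite{IKS}) that near a chosen base point $(\phi_0,b_0,c_0)$ the relevant "splitting function" $\Delta(\phi,b,c,d)$ vanishes transversally in $d$, so the implicit function theorem produces an analytic $D$. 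Then $\rho(\phi,b,c)$ is defined as the (signed) position on $M_0$ where the \emph{other} separatrix $\gamma_2$ first returns — equivalently, the distance between the two relevant level-curve intersections with $M_0$ — which is again an analytic function by smooth dependence of separatrices on parameters; I would check by interval arithmetic that $\partial_c \rho \neq 0$ at the base point, giving non-constancy in $c$.

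With these pieces in place, the phase portrait of $v_{\phi,b,c,D(\phi,b,c)}$ has exactly the combinatorial structure of $v_{\rho(\phi,b,c),\phi}$: two saddles with two realized separatrix connections, a region $U_1$ (or $U_1$ and $U_2$; one may need a slightly asymmetric choice, or rely on the order/location of the intersection points rather than on the dynamics inside $U$, so I would re-examine whether the argument in Lemma \ref{lem-eqvf} that used "phase portraits in $U_1$ and $U_2$ are different" can be replaced by an argument using only which separatrix is $\gamma_1$ versus $\gamma_2$ — e.g. the leftmost intersection versus the other — since in the Hamiltonian family both regions may contain only nodes). Assuming that combinatorial matching, the proofs of Lemma \ref{lem-eqvf} and Lemma \ref{lem-eqphi} apply: an orbital topological equivalence $H$ must preserve saddles, separatrix connections, and the region $U_1$, must restrict on a transversal to a circle homeomorphism conjugating the two Poincaré rotations (forcing $\phi_1 = \phi_2$), and must send the dense orbit $M_0 \cap \gamma_1 = \{-n\phi\}$ into itself (forcing $\xi\circ H$ to be a rotation by $k\phi$) and $M_0 \cap \gamma_2 = \{\rho_1 - n\phi\}$ into $\{\rho_2 - n\phi\}$, yielding $\rho_1 \equiv \rho_2 \pmod{n\phi}$; conversely, for $\rho$-values congruent mod $n\phi$ the cut-and-paste construction of Lemma \ref{lem-eqphi} produces the equivalence. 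Composing with Proposition \ref{prop-Ephi}, which gives non-smoothness of $E_\phi$ on $I\setminus\{n\phi\}$, yields Theorem \ref{th-an}.

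\textbf{Main obstacle.} The hard part will be the rigorous, computer-assisted verification that the separatrix-connection equation $\Delta(\phi,b,c,d)=0$ is solvable with nonzero $d$-derivative and that the resulting $\rho$ genuinely varies with $c$ — both require controlling global separatrices of a vector field on the torus via interval arithmetic, which is delicate because separatrices must be tracked over a full loop around the torus (and one must ensure no spurious singular points or extra connections appear in the chosen parameter neighborhood $V$). A secondary subtlety is checking that the phase portraits inside the two node-regions are distinguishable by $H$ (or adapting the argument to avoid needing this), since the Hamiltonian family does not obviously produce a limit cycle as in the smooth model $v_{0,\phi}$.
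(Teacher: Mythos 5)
Your overall architecture is the same as the paper's: tune the fourth parameter $d=D(\phi,b,c)$ to create the needed separatrix connections, verify non-degeneracy and non-constancy of $\rho$ in $c$ by computer assistance, then argue that Lemmas~\ref{lem-eqvf} and~\ref{lem-eqphi} apply because the combinatorics of separatrices and the Poincar\'e map on $M_0$ match the model family $v_{\rho,\phi}$. That much is right.

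However, you miss the central simplification that makes the computer-assisted part actually tractable, and this affects the feasibility of your plan. You propose to certify a global ``splitting function'' $\Delta(\phi,b,c,d)=0$ by tracking separatrices all the way around the torus with interval arithmetic, and you yourself flag this as the main obstacle. The paper avoids it entirely by exploiting the Hamiltonian structure: trajectories are level curves of $u_{\phi,b,c,d}$, and the restriction of $u$ to $M_0=\{y=0\}$ is simply $u(x,0)=x$, hence monotone. So in the strip $V_1$, which contains a single saddle $s_1$, only one of its four separatrices can exit through $\{y=0\}$ and only one through $\{y=1\}$, and the remaining two, being on the level $u=u(s_1)$, must automatically close into a loop --- no tuning of $d$ is required for the ``$U_1$'' region. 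The tuning of $d$ is used only for the strip $V_2$, which contains two saddles $s_2,s_3$; the full chain of separatrix connections in $V_2$ exists \emph{if and only if} $u(s_2)=u(s_3)$. This reduces the connection condition to the algebraic equation $u(s_3)-u(s_2)=0$ in the single unknown $d$, whose nondegeneracy in $d$ is checked by differentiating $u$ at the saddles (a purely local computation), not by propagating separatrices. Likewise $\rho$ is defined as $u(s_2)-u(s_1)$, again a local quantity. This is the step you would have to rediscover for your plan to work in practice; without it, the interval-arithmetic tracking of separatrices over a full loop is a much bigger and more delicate verification than the paper actually performs.

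Two secondary points. Your concern about distinguishing $\gamma_1$ from $\gamma_2$ (since the Hamiltonian family has no limit cycle in $U_2$) is legitimate, but it has a clean resolution here: in the analytic family, $\overline U_1$ contains one saddle while $\overline U_2$ contains two, so a topological equivalence $H$ must map $U_1\mapsto\tilde U_1$ and $U_2\mapsto\tilde U_2$, and the rest of the argument of Lemma~\ref{lem-eqvf} goes through unchanged. Also, a minor slip: $v_{\phi,b,c,d}=(u_y,-u_x)$, which on $\{y\in 2\pi\bbZ\}$ equals $(-\phi,-1)$, not $(1,\phi)$; this is immaterial (the field is parallel to the constant direction $(\phi,1)$ near the meridians, which is what the Poincar\'e-map argument needs). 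Finally, rather than verifying $\partial_c\rho\neq 0$ directly, the paper simply evaluates $\rho$ numerically at two points on the graph $d=D(\phi,b,c)$ and observes different values; analyticity then forces $\rho$ to be non-constant in $c$ --- a slightly more robust certificate than differentiating.
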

\begin{proof}
First, let us explain how the second part of the theorem implies its first part. Define an analytic function $C(\phi, r)$ implicitly on some open set by a condition $\rho(\phi, b_0, C(\phi, r))=r$. Then for fixed irrational $\phi$, the Borel reduction of  $E_\phi$ on $I\setminus \{n\phi\}_{n\in \bbZ}$ to the orbital topological equivalence is given by $\rho\to v_{\phi, b_0, C(\phi, \rho), D(\phi, b_0, C(\phi, \rho))}$.  This implies the first part of  Theorem \ref{th-an}.

\begin{figure}[h]
\begin{center}
\includegraphics[width=0.6\textwidth]{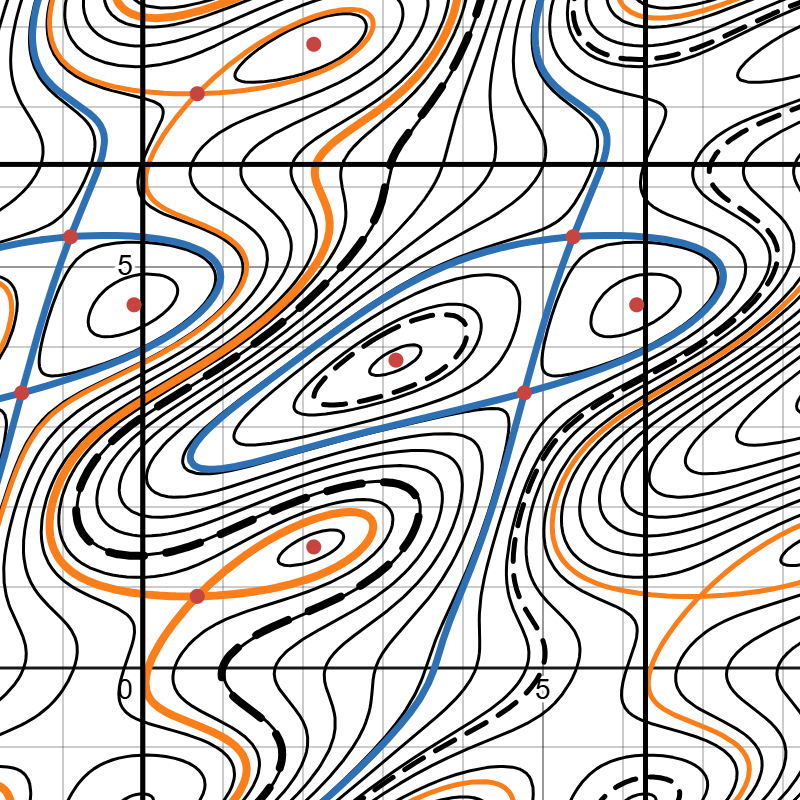}
\caption{Level curves of $u_{\phi, b,c,d} $ (phase curves of the vector fields $v_{\phi, b,c,d} $) for $\phi=1/3$, $c=1, b=2, d=1.0016$. Black square has sides of length $2\pi$.  Separatrices of the saddle $s_1$ are shown in orange thick; separatrices of  $s_2, s_3$ are shown in blue thick. Zeros of $v_{\phi, b,c,d} $ are shown in red. Black dashed lines are level curves $u_{\phi, b,c,d}=1,u_{\phi, b,c,d}=5$. }\label{fig-analytic}
\end{center}
\end{figure}
Clearly, the vector field $v_{\phi, b, c, d}$  is well-defined on the torus $T^2=(\bbR/2\pi \bbZ)^2$. It is easy to check that for $\phi\neq 0$,  it is transversal to $y=0$. Its phase curves are level curves of $u_{\phi, b, c, d}$. Since $u_{\phi, b, c, d}(x,0)=x$ and $u_{\phi, b, c, d}(x,2\pi) =x-2\pi \phi$, the Poincare map on $M_0$ is $x\to x+2\pi\phi$ whenever defined.  

Fig. \ref{fig-analytic} shows the level curves of $u_{\phi, b, c, d}$ for  $\phi=1/3, b=2,c=1,d\approx 1$.
The following statements on vector fields $v_{\phi, b, c, d}$ were verified numerically for $\phi=1/3, b=2$, $c \in [0.7, 1.1]$, $d \in [0.9, 1.3]$. 
\begin{itemize}
\item Vector fields $v_{\phi, b, c, d}$ have six zeros on the torus for all $c \in [0.7, 1.1]$, $d \in [0.9, 1.3]$.
\end{itemize}
This was checked by applying Python's fsolve method (that uses the Powell's hybrid method) with the dense mesh of parameter values and initial guesses. Zeros are marked on Fig. \ref{fig-analytic}. 
\begin{itemize}
\item  For all $c \in [0.7, 1.1]$, $d \in [0.9, 1.3]$, these six zeros remain hyperbolic; there are three saddle points, two minima of $u_{\phi, b, c, d}$ and one maximum of $u_{\phi, b, c, d}$.
\end{itemize}
This was checked by computing the Jacobians of the linear part of $v_{\phi, b, c, d}$. Jacobians remain greater than 2 in modulus. 

Let $s_1,s_2, s_3$ be the saddles of $v_{\phi, b, c, d}$, numbered left to right in $x$-coordinate, $x\in [0,2\pi]$.
\begin{itemize}
\item The meridian $y=0$, the level curve $\{u_{\phi, b, c, d}(x,y)=1, 0<y<1\}$, and a connected component of the level curve $\{u_{\phi, b, c, d}(x,y)=5, 0<y<1\}$ divide the torus into two domains $V_1$ and $V_2$; one of them ($V_1$) contains $s_1$ and a minimum of $u$, the other ($V_2$) contains $s_2,s_3$, and the remaining minimum and maximum of $u$. 
\end{itemize}
This was checked by (1) plotting these level curves for $u_{1/3, 2, 1, 1}$  (see Fig. \ref{fig-analytic}) and (2) verifying that for all $c \in [0.7, 1.1]$, $d \in [0.9, 1.3]$,  values of $u_{\phi, b, c, d}$ at its critical points in $0<y<1$ are not equal to $1 \pm 2\pi k ,5\pm 2\pi k$. This implies that critical points cannot move from one strip to another as parameters vary. 

Since level curves of $u_{\phi, b, c, d}$ are phase curves for $v_{\phi, b, c, d}$ and $u_{\phi, b, c, d}(x,0)=x$, we conclude that the correspondence map from $\{y=0\}$ to $\{y=1\}$ is defined near  $x=1,5$. Consider the domain $V_1$. Since $u_{\phi, b, c, d}$ is monotonic on $y=0$, out of four separatrices of $s_1$, only one can intersect $\{y=0\}$ and only one can intersect $\{y=1\}$. Thus the remaining separatrices of $s_1$ must form a separatrix loop in $V_1$ as shown in Fig. \ref{fig-analytic}, and  the correspondence map  from $\{y=0\}$ to $\{y=1\}$ in $V_1$ is well-defined (except a single point of intersection with a separatrix to which it extends continuously). 

Suppose that $u_{\phi, b, c, d}(s_2)=u_{\phi, b, c, d}(s_3)$. Since $u_{\phi, b, c, d}$ is monotonic on $y=0$ and $y=1$,  in the strip  $V_2$, only two of the eight separatrices of $s_2, s_3$ can intersect $\{y=0\} $ and $\{y=1\}$. The remaining separatrices must form three separatrix connections, as shown in Fig. \ref{fig-analytic}. The correspondence map  from $\{y=0\}$ to $\{y=1\}$ is everywhere defined, except two intersection points with separatrices to which it extends continuously. Thus it coincides with $x\to x+2\pi \phi$ as noted above.  

We claim that the condition  $u_{\phi, b, c, d}(s_2)=u_{\phi, b, c, d}(s_3)$ defines a graph of an analytic function $D=D(\phi, b, c)$ in the parameter space for $c \in [0.7, 1.1]$, $d\approx 2$, $\phi\approx 1/3$. This follows from the property below. 
 
\begin{itemize}
\item Derivative of  $u_{\phi, b, c, d}(s_3)-u_{\phi, b, c, d}(s_2)$ with respect to $d$ is positive   for all  $c \in [0.7, 1.1]$, $d \in [0.9, 1.3]$.
\end{itemize}

Derivative was computed via implicit function theorem. It remains between $2.05$ and $2.15$.

Hence for each $c \in [0.7, 1.1]$, the interval $d \in [0.9, 1.3]$ contains at most one value $d$ such that $u_{\phi, b, c, d}(s_2)=u_{\phi, b, c, d}(s_3)$. This value $d=D(\phi,b, c)$ was determined numerically for  $\phi=1/3, b=2$, $c \in [0.7, 1.1]$ and remains in $[0.9, 1.3]$. Since the condition is open, the same holds for all  $b\approx 2$, $\phi\approx 1/3$, and $D(\phi,b, c)$ is well-defined.  It is analytic since zeroes of $v_{\phi, b, c, d} $ depend analytically on parameters. 
 
For any vector field $v_{\phi, b, c, D(\phi,b, c)}$, let $U_1$ be the domain bounded by the separatrix connection of $s_1$; let $U_2$ be the domain bounded by the separatrix connections of $s_2, s_3$. Let stable separatrices of $s_1,s_2$ be $\gamma_1, \gamma_2$, let unstable separatrices of $s_1, s_3$ be  $\Gamma_1, \Gamma_2$.   Let $\rho(\phi, b, c) = u_{\phi, b, c, d}(s_2)-u_{\phi, b, c, d}(s_1)$ be the distance  between the intersection points of $\gamma_1$ and $\gamma_2$ with the meridian $M_0=\{y=0\}$. Numerically, we checked the following.

\begin{itemize}
\item The value $\rho(\phi, b, c)$ is not constant on the graph $(\phi, b, c, D(\phi, b, c))$: namely, for $c=0.7$ we have $d(c)\approx 1.23$ and $\rho\approx 3.40$ while for $c=1.1$ we get $d\approx 0.92$ and $\rho\approx 3.62$.
\end{itemize}

The function $\rho$ remains non-constant on $c$ for  $\phi\approx 1/3, d\approx 2$ since it is analytic. 

The remaining part of the proof is the same as for Theorem \ref{th-family}. 
While the vector field $v_{\phi, b, c, D(\phi, b, c)}$ with $\rho=\rho(\phi, b, c)$ is not orbitally topologically equivalent to $v_{\rho, \phi}$, the only difference is the explicit shape of the phase portrait inside $U_1, U_2$; the Poincare map and the behavior of separatrices $\gamma_1,\gamma_2,\Gamma_1, \Gamma_2$ is the same.  Therefore proofs of Lemma \ref{lem-eqvf} and Lemma \ref{lem-eqphi} apply to the family $v_{\phi, b, c, D(\phi, b, c)}$ with $\rho=\rho(\phi, b, c)$ without any modification, for any fixed $b$ close to $2$ and any fixed irrational $\phi$ close to $1/3$. These lemmas imply the second part of Theorem \ref{th-analytic}, which completes its proof.
\end{proof}

\bibliographystyle{amsalpha}
\bibliography{biblio}

\newcommand{\etalchar}[1]{$^{#1}$}
\providecommand{\bysame}{\leavevmode\hbox to3em{\hrulefill}\thinspace}
\providecommand{\MR}{\relax\ifhmode\unskip\space\fi MR }
\providecommand{\MRhref}[2]{%
  \href{http://www.ams.org/mathscinet-getitem?mr=#1}{#2}
}
\providecommand{\href}[2]{#2}
\begin{thebibliography}{BCF{\etalchar{+}}23}

\bibitem[BCF{\etalchar{+}}23]{open}
Jérôme Buzzi, Nishant Chandgotia, Matthew Foreman, Su~Gao, Felipe
  García-Ramos, Anton Gorodetski, François~Le Maitre, Federico
  Rodríguez-Hertz, and Marcin Sabok, \emph{Open questions in descriptive set
  theory and dynamical systems}, 2023, https://arxiv.org/pdf/2305.00248.

\bibitem[BV23]{BV}
Henk Bruin and Benjamin Vejnar, \emph{Classification of one dimensional
  dynamical systems by countable structures}, Journal of Symbolic Logic
  \textbf{88} (2023), no.~2, 562--578.

\bibitem[DLM09]{DzLiMa}
Akhtam Dzhalilov, Isabelle Liousse, and Dieter Mayer, \emph{Singular measures
  of piecewise smooth circle homeomorphisms with two break points}, Discrete
  and Continuous Dynamical Systems \textbf{24} (2009), no.~2, 381--403.

\bibitem[Eff81]{E}
Edward~G. Effros, \emph{Polish transformation groups and classification
  problems}, General Topology and Modern Analysis (Proc. Conf., Univ.
  California, Riverside, Calif., 1980) (1981), 217--227.

\bibitem[FG22]{ForGor}
Matthew Foreman and Anton Gorodetski, \emph{Anti-classification results for
  smooth dynamical systems. {P}reliminary report}, 2022, arXiv:2206.09322.

\bibitem[For18]{For18}
Matthew Foreman, \emph{What is a {B}orel reduction?}, Notices Amer. Math. Soc.
  \textbf{65} (2018), no.~10, 1263–1268.

\bibitem[FRW11]{FGW}
Matthew Foreman, Daniel~J. Rudolph, and Benjamin Weiss, \emph{The conjugacy
  problem in ergodic theory}, Annals of Mathematics \textbf{173} (2011), no.~3,
  1529--1586.

\bibitem[FW22]{ForWeiss}
Matthew Foreman and Benjamin Weiss, \emph{Measure preserving diffeomorphisms of
  the torus are unclassifiable}, Journal of the European Mathematical Society
  \textbf{24} (2022), no.~8, 2605--2690.

\bibitem[GK25]{GeKu}
Marlies Gerber and Philipp Kunde, \emph{Non-classifiability of ergodic flows up
  to time change}, Inventiones Mathematicae (2025), 1--93.

\bibitem[Gli61]{G}
James Glimm, \emph{Locally compact transformation groups}, Trans. Amer. Math.
  Soc. \textbf{101} (1961), no.~1, 124--138.

\bibitem[HKL90]{HKL}
L.~A. Harrington, A.~S. Kechris, and A.~Louveau, \emph{A {G}limm-{E}ffros
  dichotomy for {B}orel equivalence relations}, J. Amer. Math. Soc. \textbf{3}
  (1990), no.~4, 903--928.

\bibitem[IKS18]{IKS}
Yulij Ilyashenko, Yury Kudryashov, and Ilya Schurov, \emph{Global bifurcations
  in the two-sphere: a new perspective}, Inventiones Mathematicae \textbf{213}
  (2018), 461 -- 506.

\bibitem[Kun22]{Kunde}
Philipp Kunde, \emph{There is no complete numerical invariant for smooth
  conjugacy of circle diffeomorphisms}, 2022, arXiv:2209.02137.

\bibitem[Orn70]{Orn}
Donald Ornstein, \emph{Bernoulli shifts with the same entropy are isomorphic},
  Advances in Mathematics \textbf{4} (1970), no.~3, 337–352.

\bibitem[Sab16]{Sab}
Marcin Sabok, \emph{Completeness of the isomorphism problem for separable
  {C}*-algebras}, Invent. math. \textbf{204} (2016), 833–868.

\bibitem[Zie16]{Ziel}
Joseph Zielinski, \emph{The complexity of the homeomorphism relation between
  compact metric spaces}, Advances in Mathematics \textbf{291} (2016),
  635--645.

\end{thebibliography}
\end{document}